\newtheorem{theorem}{Theorem}[section]
\newtheorem{lemma}[theorem]{Lemma}
\newtheorem{corollary}[theorem]{Corollary}
\newtheorem{proposition}[theorem]{Proposition}
\theoremstyle{definition}
\newtheorem{remark}[theorem]{Remark}
\newtheorem{definition}[theorem]{Definition}
\theoremstyle{remark}
\newtheorem{notation}{Notation}
\renewcommand{\eqref}[1]{(\ref{#1})}
\renewcommand{\bigskip}{\vspace{0.2cm}}
\begin{document}

\title{{\Large 
\textbf{On the cohomology of 
the Lubin-Tate curve 
of level $2$
 and the Lusztig theory 
 over finite rings}}}
\maketitle

\begin{center}
\textbf{ Tetsushi Ito, Yoichi 
Mieda and Takahiro Tsushima}
\end{center}

\maketitle
\begin{abstract}
In \cite{IMT}, 
we study  
the \'{e}tale 
cohomology
group $W$ 
of irreducible
 components, 
with each 
having 
an affine 
model
 $X^q+X=\xi Y^{q+1}+c$
 with $\xi \in
  \mathbb{F}^{\times}_q$ 
  and $c \in \mathbb{F}_q,$ 
in the stable
 reduction 
 of 
the Lubin-Tate 
curve 
$\mathcal{X}
(\pi^2)$. 
Then, 
$W$ is considered as a 
${\rm GL}_2
(\mathcal{O}_F/\pi^2)$
-representation and
 relates to
unramified 
cuspidal representations 
of ${\rm GL}_2(F)$ of level $1.$
In this paper, 
we study a relationship 
between 
$W$ and the 
\'{e}tale cohomology groups
of some
{\it Lusztig varieties} 
costructed 
in \cite{Lus} and \cite{Lus2}.
\end{abstract}
\section{Introduction}
Let $F$ be a non-archimedean local field 
with ring of integers $\mathcal{O}_F$, 
uniformizer $\pi$ and residue field 
$\mathbb{F}_q$ of 
characteristic $p>0.$
We write $\mathbf{F}$ for 
an algebraic closure 
of $\mathbb{F}_q.$
Let 
$\bar{F}$ denote 
an algebraic closure  
of $F$ and $\mathbf{C}$ 
its completion.  
Roughly speaking, the Lubin-Tate curve 
$\mathcal{X}(\pi^n)$
is the generic fiber of the
 deformation space 
of a formal $\mathcal{O}_F$-module 
of height $2$
over $\mathbf{F}$ 
equipped with the Drinfeld 
level $\pi^n$-structure. 
Namely,  
$\mathcal{X}(\pi^n)$
is a rigid analytic curve.
In \cite{IMT}, 
we compute 
defining equations 
of irreducible 
components 
in the stable 
reduction of
 the Lubin-Tate 
 curve 
 $\mathcal{X}(\pi^2)$, 
 and study 
 the \'{e}tale 
 cohomology group of
  some components 
as a representation of 
a product of 
$G_2^F:={\rm GL}_2
(\mathcal{O}_F/\pi^2)$,
the central 
division algebra 
$\mathcal{O}^{\times}_D$ 
over $F$ of invariant 
$1/2$, and 
the inertia group $I_F.$
For $n \geq 1,$ 
let $U_D^n$ be
 open compact subgroups
of $\mathcal{O}_D^{\times}.$
We put $\mathcal{O}_n^{\times}
:=\mathcal{O}_D^{\times}/U_D^n$
and $\mathbf{G}:
=G_2^F \times 
\mathcal{O}_3^{\times} \times I_F.$

 We set
 \[
 \mathfrak{S}
 ^{\mathbb{F}^{\times}
 _{q}}_{00}:=\{
 (x_0,y_0) \in \mathbf{F}^2\ |\ 
 \xi:=x_0^qy_0-x_0y_0^q \in 
 \mathbb{F}
 ^{\times}_q,\
  x_0^{q^2-1}
  =y_0^{q^2-1}=-1
 \}.
 \]
 Then, we have 
 $|\mathfrak{S}
 _{00}
 ^{\mathbb{F}^{\times}_q}|
 =|{\rm GL}_2(\mathbb{F}_q)|
 =q(q-1)(q^2-1).$
 For  $i=(x_0,y_0) 
 \in \mathfrak{S}
 ^{\mathbb{F}^{\times}_q}_{00},$ 
 let $X_i$
 denote the 
 smooth compactification 
 of an affine 
 curve 
 $X^{q^2}-X=\xi
 (Y^{q(q+1)}-Y^{q+1}).$
 Each curve $X_i$ has 
 $q$ connected components and 
 each component with genus $q(q-1)/2$
  has an affine model
 $X^q+X=\xi Y^{q+1}+c$ 
 with some 
 $c \in \mathbb{F}_q.$
 In the stable reduction of 
 $\mathcal{X}(\pi^2)$, the 
 components 
 $\{X_i\}
 _{i \in 
 \mathfrak{S}
 ^{\mathbb{F}
 ^{\times}_q}_{00}}$ 
 appear,
  which is proved 
  in \cite{IMT}.
We set 
\[
W:=\bigoplus_
{i \in 
\mathfrak{S}
^{\mathbb{F}^{\times}_q}_{00}}
H^1(X_i,
\overline{\mathbb{Q}}_l) 
\subset 
H^1(\mathcal{X}(\pi^2)_{\mathbf{C}},
\overline{\mathbb{Q}}_l)
\]
with a prime number $l \neq p.$
Apriori, the 
prodcut group 
$\mathbf{G}$ 
acts on the curve
 $\mathcal{X}(\pi^2)_{\mathbf{C}}$
 on the right,
  and hence
 on the \'{e}tale
  cohomology group 
 $H^1(\mathcal{X}(\pi^2)_{\mathbf{C}},
\overline{\mathbb{Q}}_l)$
on the left.
See \cite[Section 1]{Ca} 
for more details.
Then, the subspace $W$ is stable under 
the 
$\mathbf{G}$-action.
In \cite[7.2]{IMT}, 
we analyze 
$W$ as a 
$\mathbf{G}$-representation.
Let $E/F$ denote 
the unramified quadratic extension.
Canonically, 
the group 
$\mathcal{O}^{\times}_3$
contains
 $\Gamma:
 =(\mathcal{O}_E/\pi^2)^{\times}$
as a subgroup.
For any finite 
extension $L/F$, 
let $W_L$ denote 
the Weil group of 
$L$ and $I_L$ the 
inertia subgroup 
of $W_L.$
 Then, we have the 
Artin reciprocity map 
$\mathbf{a}_L:W^{\rm ab}_{L} 
\overset{\sim}{\to} L^{\times}$
normalized 
such that
 the geometric Frobenius
goes to a 
prime element under this map.
We have the 
restriction $\mathbf{a}_L:
I_L^{\rm ab} 
\to \mathcal{O}^{\times}_L.$
Hence, in particular, 
the map 
$\mathbf{a}_E$ 
induces the 
following map
$I_F \to 
I_F^{\rm ab} 
\simeq I_E^{\rm ab} 
\overset{\mathbf{a}_{E}
|_{I^{\rm ab}_E}}
{\longrightarrow }
 \mathcal{O}^{\times}_E \to 
\Gamma$, which we denote by
 $\mathbf{a}_E.$
Then, 
the inertia group
 $I_F$
acts on $W$ by 
factoring through 
the map 
$\mathbf{a}_E:I_F \to 
\Gamma.$
Hence, the restriction 
 $W|_{G_2^F \times \{1\} 
\times I_F}$
can be considered as a 
$G_2^F \times \Gamma$-representation,
which we denote by $W_1$.
Let $W_2$ denote the restriction
 $W|_{G_2^F \times \Gamma \times \{1\}}.$

Let $G$ be  
 a connected 
 reductive
algebraic 
 group over 
 $\mathbf{F}$
 with a given
  $\mathbb{F}_q$-rational 
  structure
 with associated 
 Frobenius 
 morphism $F:G \to G.$
For 
 $n \geq 1$, 
 in \cite{Lus}
 and \cite{Lus2}, 
G.\ Lusztig constructs 
an affine algebraic 
 variety over $\mathbf{F}$,  
with 
$G(\mathcal{O}_F/\pi^n)$-action
 and
 some torus 
 $T(\mathcal{O}_F/\pi^n)$-action, 
whose \'{e}tale 
cohomology group  
realizes some 
irreducible 
representations of 
$G(\mathcal{O}_F/\pi^n).$ 
These representations 
are attached 
to characters of 
$T(\mathcal{O}_F/\pi^n)$
 in 
general position.
The works 
\cite{Lus} and \cite{Lus2} 
are generalizations
 of the 
 Deligne-Lusztig theory 
  \cite{DL}
over finite fields 
to finite rings 
$\mathcal{O}_F/\pi^n.$
In this paper, we call
 the variety over $\mathbf{F}$
{\it  the Lusztig variety 
for $G(\mathcal{O}_F/\pi^n).$}
Specialized to 
a case 
$S_2^F:={\rm SL}_2
(\mathcal{O}_F/\pi^2),$
 Lusztig gives an 
 explicit description 
 of the Lusztig 
 variety of dimension $2$
  for $S_2^F$
 and studies 
 its cohomology 
 groups 
 in \cite[Section 3]{Lus}.
 In the cohomology groups, all 
 {\it unramified} representations, 
 in a sense of \cite[p.37]{Sha},
  occur each one 
  with multiplicity $2$. 
  Hence, we call 
  the variety the 
  unramified 
  Lusztig surface for $S_2^F.$
 
In this paper, we will 
study the
 affine unramified 
 Lusztig surface 
 $\Tilde{X}$
  for $G_2^F$ 
  in the same way 
  as in
   \cite[Section 3]{Lus}. 
  The surface $\Tilde{X}$ 
  admits an 
  action
   of a product group 
   $G_2^F \times \Gamma$.
 Then, 
 we investigate 
  the \'{e}tale 
  cohomology 
  group 
  $H_c^2(\Tilde{X},
  \overline{\mathbb{Q}}_l)$ as 
  a $G_2^F \times 
  \Gamma$-representation
   in the same way as in loc.\ cit.   
As a result, 
we show that 
a direct sum $\rho_{\rm DL}$ 
of all {\it cuspidal} or 
{\it unramifed}
representations of $G_2^F$, 
in a sense of \cite{Sta},  
appears 
as a 
$G_2^F$-subrepresentation 
of $H_c^2(\Tilde{X},
  \overline{\mathbb{Q}}_l)$.
By using
 the explicit description 
 of $H_c^2(\Tilde{X},
 \overline{\mathbb{Q}}_l)$
as a $G_2^F$-representation 
and the analysis of $W$
in \cite{IMT}, 
 we prove that 
the $G_2^F \times
 \Gamma$-representations 
$W_1$ and $W_2$ 
mentioned above
are written with respect to
 $\rho_{\rm DL}.$  
  See
   Proposition \ref{gey}
 for precise 
 statements. 
 In other words, 
 the representation $W$
  gives another  
  cohomological
   construction 
  for unramfied 
  irreducible 
  representations 
  of $G_2^F.$
For $S^F_2,$ 
we 
obtain 
the same things 
as $G_2^F.$
See Remarks \ref{s_1}
 and \ref{s_2}  
for more details.
In section \ref{5}, 
we write down 
the Lusztig curve
 $X_D$
for 
$\mathcal
{O}_3^{\times}$
and study its 
cohomology
 group $H_c^1$
 as a
  $\mathcal{O}_3^{\times}
   \times \Gamma$-representation.
 The Lusztig variety 
 for the group of 
 reduced norm one in a 
 central division 
 algebra of invariant $1/n$
  over $F$
  is explicitly 
  described 
  in \cite[Section 2]{Lus}.
We consider the product 
$\mathbf{X}:=\Tilde{X} \times X_D$.
Then, this variety $\mathbf{X}$
admits an action of 
 $G_2^F \times 
 \mathcal{O}_3^{\times} 
 \times \Gamma.$
 Hence, the \'{e}tale cohomology 
 group $H_c^3(\mathbf{X},
 \overline{\mathbb{Q}}_l)$
 is considered as a 
 $\mathbf{G}$-representation
 through a surjective 
 map $\mathbf{G} \to 
 G_2^F \times \mathcal{O}_3^{\times} 
 \times \Gamma.$
As a result of 
the analyses of 
$H_c^1(X_D,
\overline{\mathbb{Q}}_l)$ as a 
$\mathcal{O}_3^{\times}
 \times \Gamma$-representation 
 in Lemma \ref{lq1}
and $H_c^2(\Tilde{X},
\overline{\mathbb{Q}}_l)$
as a $G_2^F \times \Gamma$-representation
in Proposition \ref{sp},
we show that $W$ is contained in 
$H_c^3(\mathbf{X},\overline{\mathbb{Q}}_l)$
as a 
$\mathbf{G}$-subrepresentation
in Theorem \ref{fin}.
 See also 
 \cite{Y} for a 
 connection 
 between the 
 non-abelian 
 Lubin-Tate 
 theory of level $1$ 
 and the 
 Deligne-Lusztig
 theory over finite fields. 
\begin{notation}
We fix some 
notations used 
throughout 
this paper.
Let $F$ be a 
non-archimedean 
local field with 
ring of integers 
$\mathcal{O}_F$, 
uniformizer 
$\pi$ and residue field 
$\mathbb{F}_q$ of 
characteristic $p>0.$
We identify $(\mathcal{O}_F
/\pi^2)^{\times}$
with $\mathbb{F}^{\times}_q 
\times \mathbb{F}_q$ by 
$a_0+a_1\pi \mapsto 
(a_0,(a_1/a_0)).$
Let $E/F$ be the unramified 
quadratic extension and 
$\mathcal{O}_E$ 
the ring of integers.
Of course, 
the residue field of $E$
is equal to 
$\mathbb{F}_{q^2}.$
Let $\tau \in 
{\rm Gal}(E/F)$ be 
a non-trivial element.
Let $\mathbf{F}$ 
denote an
 algebraic closure 
 of $\mathbb{F}_q.$
We set $G_n^F:
={\rm GL}_2
(\mathcal{O}_F/\pi^n)$
and $S_n^F:={\rm SL}_2
(\mathcal{O}_F/\pi^n)$
for $n \geq 1.$
Let $D$ be the 
central division algebra 
over $F$ of 
invariant $1/2$.
Let $\mathcal{O}_D$ 
denote the ring of 
integers of $D$, 
and $\varphi$
a prime element 
of $D$ such that $\varphi^2=\pi.$
Then, we have 
$\mathcal{O}_D=\mathcal{O}_E
 \oplus 
 \varphi \mathcal{O}_E$
 with $\varphi a=a^{\tau} \varphi$
 for $a \in \mathcal{O}_E.$
Furthermore, for $n \geq 1,$ 
let $U_D^n$ denote 
the open compact
 subgroups $1+(\varphi^n)$
  of $D^{\times}.$
 We set 
 $\mathcal{O}_n^{\times}:
 =\mathcal{O}^{\times}_D/U_D^n.$
 Let ${\rm Nrd}_{D/F}
 :D^{\times} \to 
 F^{\times}$
 denote the 
 reduced norm of $D^{\times}.$
For any finite 
extension $L/F$, 
let $W_L$ denote 
the Weil group of 
$L$ and $I_L$ the 
inertia subgroup 
of $W_L.$
 Then, we have the 
Artin reciprocity map 
$\mathbf{a}_L:W^{\rm ab}_{L} 
\overset{\sim}{\to} L^{\times}$
normalized such that
 the geometric Frobenius
goes to a 
prime element under this map.
Then, 
we have the 
restriction 
$\mathbf{a}_L:
I_L^{\rm ab} 
\to \mathcal{O}
^{\times}_L.$
For a finite extension $L/K$, 
we 
have $\mathbf{a}_K={\rm Nr}_{L/K}
 \circ \mathbf{a}_L.$
If $X$ is an affine algebraic variety
over $\mathbf{F}$ and $r \geq 1$, 
we set $X_r:=X(\mathbf{F}[[\pi]]/(\pi^r)).$
For a prime number $l \neq p$ and 
 a finite abelian group $M$, we write 
$M^{\vee}$ for a character group ${\rm Hom}
(M,\overline{\mathbb{Q}}^{\times}_l).$
For a representatin $\pi$ of a finite group 
$G$ over $\overline{\mathbb{Q}}_l,$
we write $\pi^{\vee}$ for its dual.
For any variety $Y/\mathbf{F}$ with 
an action of a
 finite abelian group 
and any character 
$w$ of that finite group,
let $H_c^j
(Y,
\overline
{\mathbb{Q}}_l)_w$ 
denote the subspace of
 $H_c^j(Y,\overline{\mathbb{Q}}_l)$
on which the finite group acts according to 
$w$.
\end{notation}

\section{Cuspidal representation of
$G_2^F$
}\label{Go}
In this section, 
we briefly recall definitions of
cuspidal 
representation of
$G_2^F,$ and 
strongly primitive character
of a torus $\Gamma:
=(\mathcal{O}_E/\pi^2)^{\times}$ 
in Definitions 
\ref{cus} and \ref{str}, which 
 are due to  \cite[5.1 and 5.2]{AOPS}, 
 \cite[6.2]{Onn},  
 \cite[p.2835]{Sta} and
  \cite{Sta2}. 
A cuspidal
representation of $G^F_2$
has degree $q(q-1)$, and 
there exist $q(q-1)(q^2-1)/2$ 
cuspidal representations 
up to equivalence.
These cuspidal 
representations
are parametrized 
by strongly 
primitive 
characters of 
a maximal 
torus $\Gamma 
\subset G^F_2.$
See Remark \ref{wq}.3
 for more details.

First, 
we prepare some notations.
Let $m:G^F_2 \to G^F_1$
be the canonical surjection.
We write $\bar{g}$ 
for the image of 
$g \in G_2^F$
by the 
map $m$.
We set $N:={\rm Ker}\ m.$
Then, the
 group
 $N$ is isomorphic
  to an abelian group 
${\rm M}_2(\mathbb{F}_q).$
All irreducible representations of 
$G^F_2$ occur as an 
irreducible component
of the induced 
representation 
${\rm Ind}_N^{G^F_2}(\Phi)$
with some 
character $\Phi \in N^{\vee}$.
We fix a non-trivial 
character $\eta 
\in \mathbb{F}^{\vee}_q.$
Then, we have the 
following isomorphism
\begin{equation}\label{do_n}
{\rm M}_2(\mathbb{F}_q) 
\overset{\sim}{\longrightarrow} N^{\vee}\ ;\
\beta \mapsto 
(\eta_{\beta}:h \mapsto \eta({\rm Tr}
_{{\rm M}_2(\mathbb{F}_q)/\mathbb{F}_q}(
\beta\pi^{-1}(h-1))).
\end{equation}
For example, see \cite[Section 2]{Sta2} 
for more details. 
The group $G^F_2$ 
acts on $N^{\vee}$ by 
conjugation.
For $\Phi \in N^{\vee},$ 
let $T(\Phi)$ denote the 
centralizer of $\Phi$
in $G_2^F$. 
Then, $T(\Phi)$ 
is a torus of $G^F_2.$
Let $N(\Phi):=T(\Phi)N.$ Then, 
$N(\Phi)$ is the stabilizer of $\Phi$ in $G_2^F.$
By the Clifford theory, 
any irreducible representation
$U$ of $N(\Phi)$ such that $\Phi \subset U|_{N}$
is of the form $\Psi_{\psi,\Phi}:$
$$\Psi_{\psi,\Phi}(tn)=\psi(t)\Phi(n)$$
for $t \in 
T(\Phi),n \in N$ 
where $\psi \in T(\Phi)^{\vee}$
and satisfies $\psi|_{T(\Phi) \cap N}
=\Phi|_{T(\Phi) \cap N}.$ 
See \cite[Theorem 2.1]{Sta2} for more
 details.
\begin{definition}\label{cus}
Let $\rho$ be an irreducible
 representation of $G^F_2.$
We assume that 
$\rho$ does not 
factor through $G^F_1.$
We call $\rho$ {\it cuspidal} if 
$\rho$ is equivalent to 
${\rm Ind}_{N(\Phi)}^{G_2^F}(\Psi_{\psi,\Phi})$
with
$\Phi=\eta_{\beta}$ in
 (\ref{do_n}), and 
$\beta$ is conjugate 
to the following
\begin{equation}\label{re2}
\left(
\begin{array}{cc}
0 & 1 \\
-\Delta & s
\end{array}
\right)
\end{equation}
where 
$X^2-sX+\Delta$ 
is an 
irreducible polynomial
in $\mathbb{F}_q[X]$.
\end{definition}
We also call 
cuspidal representation of $G_2^F$
{\it unramified}.
\begin{definition}(\cite[5.1]{AOPS})
\label{str}
We call a character $\psi$ of 
$\Gamma=(\mathcal{O}_E/\pi^2)^{\times}$
{\it strongly primitive} if
the restriction of 
$\psi$ to a subgroup
$\mathbb{F}_{q^2} \simeq 
{\rm Ker}\ (\Gamma
\to (\mathcal{O}_E/\pi)^{\times})$
does not factor 
through the trace map 
${\rm Tr}
_{\mathbb{F}_{q^2}/\mathbb{F}_q}:
\mathbb{F}_{q^2}
\to \mathbb{F}_q.$
\end{definition}

\begin{remark}\label{wq}
1.\ For ${\rm SL}_2,$ a similar definition 
to Definition \ref{cus}
is found in \cite[p.37]{Sha},
\\2.\ The dimension of the 
cuspidal representation of $G^F_2$
is equal to $q(q-1),$ 
because we have $[G^F_2:N(\Phi)]=q(q-1)$ if 
$\Phi$ is conjugate to (\ref{re2}). 
\\3.\ The definition \ref{cus} 
of cuspidal representation of 
$G_2^F$
above is equivalent 
to a definition of 
{\it strongly cuspidal} 
representation 
in \cite{Onn} or \cite{AOPS}. 
Descriptions of 
strongly cuspidal representations
are given in \cite[6.2]{Onn} 
and \cite[5.2]{AOPS}.
Then, as stated in \cite[Theorem B]{AOPS},
there is a canonical 
bijective correspondence
between strongly cuspidal 
representations of $G^F_2$
and ${\rm Gal}(E/F)$-orbits 
of strongly primitive 
characters of 
$\Gamma$. 
Therefore, the number of 
cuspidal representations
 matches the number 
of Galois orbits 
of strongly primitive characters of
$\Gamma$. 
Hence, there exist 
$q(q-1)(q^2-1)/2$ 
cuspidal representations up to equivalence,
which is pointed out 
in \cite[6.2]{Onn}.
\end{remark}

\section{Unramified 
Lusztig surface for 
$G_2^F$ 
and its cohomology 
$H_c^2$}\label{Lo}
In this section, 
we recall 
the Lusztig theory 
in \cite{Lus}
 and \cite{Lus2},
specialized to the case 
$G_2^F$.
The papers 
\cite{Lus} and \cite{Lus2}
are generalizations of 
the Deligne-Lusztig 
theory over a finite field
to 
finite rings 
$\mathcal{O}_F/\pi^r$ 
with $r \geq 2$.
Let $G$ be 
a reductive connected 
algebraic 
group 
over 
$\mathbf{F}$ with 
associated 
Frobenius map 
$F:G \to G$
and 
$T$ is a 
maximal 
torus of $G$.
Roughly speaking, 
the Lusztig variety for  
$G(\mathcal{O}_F/\pi^r)$
means some variety over $\mathbf{F}$ 
with $G(\mathcal{O}_F/\pi^r) 
\times T(\mathcal{O}_F/\pi^r)$-action, 
and 
its $\ell$-adic cohomology 
groups
give a 
correspondence between
characters of 
$T(\mathcal{O}_F/\pi^r)$
in general position 
and some 
irreducible 
representations 
of $G(\mathcal{O}_F/\pi^r).$ 
A concrete description 
of the Lusztig surface 
for $S_2^F$ 
is given in \cite[section 3]{Lus}.
In this section, 
in the same way as in loc.\ cit.,
we explicitly 
describe the 
Lusztig surface
for $G_2^F$ and 
the 
\'{e}tale cohomology
 group $H_c^2$
of it. 
The cohomology group
realizes all 
unramifed or cuspidal 
representations 
of $G_2^F$ 
introduced 
in Definition 
\ref{cus}.
See Proposition \ref{sp} 
and Corollary \ref{Ls}
for more details.
Arguments in this section are 
almost same as the ones in loc.\ cit.

In the following, 
we assume that 
$p$ is odd and 
\begin{equation}\label{asd}
{\rm char}\ F=p>0,\ 
{\rm or},\ 
{\rm char}\ F=0\ {\rm and}\ 
e_{F/\mathbb{Q}_p} \geq 2.
\end{equation}
Let $F^{\rm ur}$
be the maximal unramified 
extension of $F$ 
in a fixed
 algebraic 
 closure 
 of $F.$
$A:=\mathcal{O}
_{F^{\rm ur}}
/(\pi^2)$.
Then, by 
the assumption (\ref{asd}), 
we have $A \simeq 
\mathbf{F}[[\pi]]/(\pi^2).$
Define $F:A \to A$ by 
$F(a_0+a_1\pi)=a_0^q+a_1^q\pi$
where $a_0,a_1 \in \mathbf{F}.$
 Let $V$ be a $2$-dimensional 
$\mathbf{F}$-vector space with a fixed 
$\mathbb{F}_q$-rational
structure with the 
Frobenius 
map $F:V \to V.$ 
Let $e,e'$ be a basis of $V$.
 Let 
 $G:={\rm GL}(V)$.
 We identify 
 $G \simeq {\rm GL}_2(\mathbf{F})$
 by $g \mapsto 
 {\footnotesize 
 \left(
\begin{array}{cc}
a & b \\
c & d
\end{array}
\right)}
$ with $eg=ae+ce',e'g=be+de'.$
Then, $G$ also 
has a 
$\mathbb{F}_q$-rational 
structure
with the 
Frobenius map $F:G \to G$
such that $F(vg)=F(v)F(g)$ for
$g \in G, v \in V.$
We put $V_2:=V 
\otimes_{\mathbf{F}}A.$
Clearly, $V_2$ is a 
free $A$-module of rank $2$.
Let $T:=\{{\footnotesize
 \left(
\begin{array}{cc}
F(a) & 0 \\
0 & a
\end{array}
\right)}
\} \subset G$
 and $U:=\{{\footnotesize 
 \left(
\begin{array}{cc}
1 & c \\
0 & 1
\end{array}
\right)}
\} \subset G$.
Let $v:={\footnotesize 
\left(
\begin{array}{cc}
0 & 1 \\
-1 & 0
\end{array}
\right)}.
$

We define as follows
\begin{equation}\label{Ld}
\Tilde{X}:=\{g \in G_2\ |\ F(g)g^{-1} 
\in U_2v\},
\end{equation}
which we call the 
{\it 
unramified Lusztig 
surface for $G_2^F.$}
Then, $G_2^F$ acts on $\Tilde{X}$
on the {\it right.}
This definition 
is not equal to 
the one 
in 
\cite[3.2]{Lus}.
To obtain a 
left $G_2^F$-action 
on the cohomology group of 
$\Tilde{X},$
 in (\ref{Ld}),
 we slightly modify
 the definition 
 in loc.\ cit.\ 
 By a direct computation, 
 we check that, 
 for $g \in G_2^F,$
 the condition
$F(g)g^{-1} \in U_2v$ is equivalent to
$g=
{\footnotesize 
\left(
\begin{array}{cc}
-F(\mathbf{x}) & -F(\mathbf{y}) \\
\mathbf{x} & \mathbf{y}
\end{array}
\right)}
$
with some 
$\mathbf{x},\mathbf{y} \in A$
and ${\rm det}(g)
 \in (\mathcal{O}_F/\pi^2)^{\times}.$
We set
\begin{equation}\label{t}
\Gamma:=\{t \in T_2\
 |\ tF(t) \in 
 T_1^F\}.
 \end{equation}
 Let $\Gamma \ni t$
  act on $\Tilde{X}$
 by $g \mapsto t^{-1}g.$
Then, the group 
$\Gamma$ is isomorphic to
the following
$\{(a_0,a_1) 
\in \mathbf{F}^2\ |\ a_0 
\in \mathbb{F}
^{\times}_{q^2},\
 a_0^qa_1+a_0a_1^q 
 \in \mathbb{F}_{q}\}.$
Hence,
 $\Gamma$
 is isomorphic 
to a group 
$(\mathcal{O}_E
/\pi^2)^{\times} 
\ni a_0+a_1\pi$
of order $q^2(q^2-1).$
In the following, we fix 
 the identification
$\Gamma \simeq 
(\mathcal{O}_E/\pi^2)^{\times}.$
We define a 
subgroup
$A':=
\{1+\pi a_1\ 
|\ a_1^q+a_1=0\}
 \subset {\Gamma}.$
We put  
$$\mathfrak{S}:=
\{x \in V_2\ |\ x 
\wedge F(x) 
\in 
(\mathcal{O}_F/\pi^2)^{\times}
(e \wedge e')\}$$
with a 
 right
 $G_2^F$-action
  $x \mapsto xg.$ 
The group $\Gamma$ acts 
on $\mathfrak{S}$ by the inverse of 
scalar multiplication.
Then, 
as in \cite[3.4]{Lus}, 
we have 
an isomorphism 
$\iota:\Tilde{X} 
\overset{\sim}
{\longrightarrow} 
\mathfrak{S}\ ;\ g
 \mapsto e'g,$
  which is 
  compatible 
  with the 
  $G_2^F\ 
  \times 
  \Gamma$-actions.
We write 
$x=\mathbf{x}e
+\mathbf{y}e' 
\in \mathfrak{S}$
with $\mathbf{x}=x_0+x_1\pi$
and $\mathbf{y}=y_0+y_1\pi.$
Then,
we have the following 
isomorphism by 
a direct computation
\begin{equation}
\label{coo1}
\mathfrak{S} \simeq 
\{(x_0,x_1,y_0,y_1) \in 
\mathbf{F}^{4}\ |\ 
x_0y_0^q-x_0^qy_0 \in 
\mathbb{F}^{\times}_q,\ 
x_1y_0^q+x_0y_1^q
-x_0^qy_1-x_1^qy_0 
\in \mathbb{F}_q\}.
\end{equation}
Let $\lambda=a_0+\pi a_1 
\in \Gamma.$
Then, $\lambda^{-1}$ acts 
on $\mathfrak{S}$
as follows
\begin{equation}\label{ooi1}
\lambda^{-1}:\ 
(x_0,x_1,y_0,y_1) 
\mapsto (a_0x_0,a_1x_0+a_0x_1,
a_0y_0,a_1y_0+a_0y_1).
\end{equation}
Let
$g={\footnotesize \left(
\begin{array}{cc}
a_0+a_1\pi & b_0+b_1\pi \\
c_0+c_1\pi & d_0+d_1 \pi
\end{array}
\right)}
\in G_2^F.$
Then, the
 element $g$ acts
  on $\mathfrak{S}$ 
as follows
\begin{equation}\label{acto}
 g:\ (x_0,x_1,y_0,y_1) \mapsto
(a_0x_0+c_0y_0,a_1x_0+a_0x_1+c_1y_0+c_0y_1,
b_0x_0+d_0y_0,b_1x_0+b_0x_1+d_0y_1+d_1y_0).
\end{equation}

Let $l \neq p$ be a prime number.
In the following, 
we analyze the \'{e}tale 
cohomology group $H_c^2(\Tilde{X},
\overline{\mathbb{Q}}_l)$
 on the basis of
  arguments in \cite[Section 3]{Lus}.
To do so, 
as in loc.\ cit.,
 we consider the following 
fibration to the Deligne-Lusztig curve 
$\mathfrak{S}_0:
=\{(x_0,y_0) \in 
\mathbf{F}^2\ |\ x_0y_0^q-x_0^qy_0 \in 
\mathbb{F}^{\times}_q\}$
for $G_1^F$
\begin{equation}\label{fib}
\kappa:\mathfrak{S} \longrightarrow 
\mathfrak{S}_0\ ;\ 
(x_0,x_1,y_0,y_1) 
\mapsto (x_0,y_0).
\end{equation}
Then, we have 
$\kappa(xg)=\kappa(x)\bar{g}$ for 
all $x \in \mathfrak{S}$ 
and $g \in G_2^F.$
The fiber of 
$\kappa$ is 
stable under 
the action of 
$A'$ by (\ref{ooi1}).
Let $\mathfrak{S}
^{\mathbb{F}_q^{\times}}_{00} 
\subset 
\mathfrak{S}_{0}$
be the closed subset 
defined 
by $x_0^{q^2-1}
=y_0^{q^2-1}=-1.$
The set $\mathfrak{S}
^{\mathbb{F}^{\times}_q}_{00}$
is stable under the 
action of $G_1^F$, and 
its action is 
simply transitive.
Hence, the set 
$\mathfrak{S}
^{\mathbb{F}_q^{\times}}_{00}$
consists of 
$|{G}^F_1|=q(q-1)(q^2-1)$
 closed points.
 We set 
 $\xi:=x_0^qy_0-x_0y_0^q.$
Furthermore, we 
put as follows
\[
s_0:=y_0^qx_1-x_0^qy_1,\ 
s_1:=-\xi^{-1}(x_0y_1-x_1y_0),\ 
t_0:=x_0y_0^{q^2}-x_0^{q^2}y_0.
\]
Then, the fiber 
$\kappa^{-1}((x_0,y_0))$ 
for $(x_0,y_0) \in \mathfrak{S}_0$
is identified with the following, 
by a direct computation, 
\begin{equation}\label{fi}
\{(s_0,s_1) 
\in \mathbf{F}^2\ |\
s_0^q+s_0
+s_1^q
t_0 \in 
\mathbb{F}_q\}.
\end{equation}
For $(x_0,y_0) 
\in \mathfrak{S}_0,$
$x_0^{q^2-1}=y_0^{q^2-1}=-1$ is 
equivalent to 
$t_0=0.$
First, let $(x_0,y_0) \in 
\mathfrak{S}_{01}:=\mathfrak{S}_0 
\backslash 
\mathfrak{S}^{\mathbb{F}
_q^{\times}}_{00}.$
Then, we have 
$t_0 \neq 0.$
Hence, by (\ref{fi}), 
the fiber 
$\kappa^{-1}((x_0,y_0))$
is a disjoint union 
of $q$ affine lines.
Secondly, we have 
the following identification 
again by (\ref{fi})
\begin{equation}\label{ls}
\mathfrak{S}_{\ast}:=
\kappa^{-1}(\mathfrak{S}^
{\mathbb{F}_q^{\times}}_{00})
 \simeq 
 \{(x_1,y_1) 
 \in \mathbf{F}^2\ 
 |\ x_1y_0^q-x_0^qy_1=s_0\}_
{s_0 \in \mathbb{F}_{q^2}, (x_0,y_0) 
\in \mathfrak{S}
^{\mathbb{F}^{\times}_q}_{00}} 
\simeq 
\mathbb{A}^1 \times 
{\mathbb{F}_{q^2} 
\times
 \mathfrak{S}^{\mathbb{F}
 ^{\times}_q}_{00}}.
\end{equation}
Furthermore, 
$\mathfrak{S}_{\ast}$ 
is 
stable under 
the actions of 
$G_2^{F}$ and ${\Gamma}.$
We write down 
the action of $G_2^F$ and 
${\Gamma}$ on the fiber space
$\mathfrak{S}_{\ast}$ under
 the identification (\ref{ls}).
For $g 
={\footnotesize \left(
\begin{array}{cc}
a_0+a_1\pi & b_0+b_1\pi \\
c_0+c_1\pi & d_0+d_1\pi
\end{array}
\right)}
\in G_2^F,$
we set
\begin{equation}\label{g_1}
f(i,g):=\{(a_1x_0+c_1y_0)
(b_0x_0+d_0y_0)^q-
(a_0x_0+c_0y_0)^q
(b_1x_0+d_1y_0)\} \in 
\mathbb{F}_{q^2}.
\end{equation}
Note that, if $g \in 
S_2^F$, 
we easily check 
 $f(i,g)^q+f(i,g)=0.$
Then, $g$ acts on 
$s_0 
\in \mathbb{F}_{q^2}$ in (\ref{ls})
as follows, 
by (\ref{acto}), 
\begin{equation}\label{acto3}
g:s_0 \mapsto 
{\rm det}(\bar{g})s_0+f(i,g).
\end{equation} 
On the other hand, 
by (\ref{ooi1}),
 an element 
$\lambda=a_0+a_1\pi 
\in {\Gamma}$ acts 
on 
$s_0 \in \mathbb{F}_{q^2}$
as follows
\begin{equation}\label{tri}
\lambda:s_0 
\mapsto 
a_0^{-(q+1)}
(s_0+(a_1/a_0) \xi).
\end{equation}
We have the 
following 
long exact sequence 
\begin{equation}\label{ex}
\cdots \to H_c^j(
\kappa^{-1}(\mathfrak{S}_{01}),
\overline{\mathbb{Q}}_l)
\to 
H_c^j(\mathfrak{S},
\overline{\mathbb{Q}}_l)
\to H_c^j(\mathfrak{S}_{\ast},
\overline{\mathbb{Q}}_l)
\to \cdots.
\end{equation}
The action of $A'$
on $\kappa^{-1}(\mathfrak{S}_{01})$
 preserves 
each connected 
component of each fiber 
 of the map 
 $\kappa:
 \kappa^{-1}(\mathfrak{S}_{01}) \to 
\mathfrak{S}_{01}.$
Hence, 
$A'$ acts trivially 
on 
$H_c^j(\kappa^{-1}
(\mathfrak{S}_{01}),
\overline{\mathbb{Q}}_l)$ 
for all $j$.
Therefore, by 
(\ref{ex}), 
the restriction 
$H_c^j(\mathfrak{S},
\overline{\mathbb{Q}}_l) 
\to 
H_c^j(\mathfrak{S}_{\ast},
\overline{\mathbb{Q}}_l)$
is an isomorphism on
 the part where we have 
 $\sum_{\lambda \in A'}\lambda=0.$
Since 
$\mathfrak{S}_{\ast}$
is an affine 
line bundle 
over 
$\mathfrak{S}
^{\mathbb{F}
^{\times}_q}_{00} \times 
\mathbb{F}_{q^2},$
 we acquire an isomorphism
\begin{equation}\label{ft}
H_c^2(\mathfrak{S}_{\ast},
\overline{\mathbb{Q}}_l) \simeq 
H^0(\mathfrak{S}
^{\mathbb{F}_q^{\times}}_{00}
 \times \mathbb{F}_{q^2},
 \overline{\mathbb{Q}}_l).
 \end{equation}
Note that we have 
${\rm dim}\ 
H_c^2(\mathfrak{S}_{\ast},
\overline{\mathbb{Q}}_l)^
{\sum_{\lambda \in A'}\lambda=0}
=q^2(q-1)^2(q^2-1).$


In the following, 
we will 
prove that 
the \'{e}tale 
cohomology group 
$H_c^2(\Tilde{X},
\overline{\mathbb{Q}}_{l})$
contains all cuspidal 
representations of $G_2^F$
each one 
with multiplicity $2$ 
in Corollary \ref{Ls}.
More precisely,
 the part  
$H_c^2(\Tilde{X},
\overline{\mathbb{Q}}_l)
^{\sum_{\lambda \in A'}
\lambda=0}=
H_c^2(\mathfrak{S}_{\ast},
\overline{\mathbb{Q}}_l)
^{\sum_{\lambda \in A'}
\lambda=0}$ 
is the direct sum of 
all cuspidal 
representations each
with multiplicity $2$
 similarly as in 
\cite[subsection 3.4]{Lus}.
The part 
$H_c^2(\Tilde{X},
\overline{\mathbb{Q}}_l)_w$
is a cuspidal 
representation if
 $w|_{A'} \neq 1$, 
 which we will 
 give a proof 
 in Proposition 
 \ref{sp} later.
Note that $w \in 
\Gamma^{\vee}$ 
such that $w|_{A'} \neq 1$ 
is a strongly 
primitive character 
in Definition \ref{str}.

To describe 
$H_c^2(\Tilde{X},\overline{\mathbb{Q}}_l)
^{\sum_{\lambda \in A'}\lambda=0},$
we define a 
$G_2^F \times \Gamma$-representation 
${\rho}_{\rm DL}.$ 
We consider $\mathbb{F}^{\vee}_q$
as a subgroup of 
$\mathbb{F}^{\vee}_{q^2}$
by ${\rm Tr}
_{\mathbb{F}_{q^2}/\mathbb{F}_q}^{\vee}.$
For a character $\psi 
\in \mathbb{F}_{q^2}^{\vee}$ 
and $a \in \mathbb{F}_{q^2},$
we write $\psi_a 
\in \mathbb{F}_{q^2}^{\vee}$
 for the character 
$x \mapsto \psi(ax).$
We set as follows 
\begin{equation}\label{kin}
V_{\rho}:=
\bigoplus_{i \in \mathfrak{S}
^{\mathbb{F}^{\times}_{q}}_{00}}\bigoplus_
{\psi \in \mathbb{F}_{q^2}^{\vee} 
\backslash \mathbb{F}_q^{\vee}} 
\overline{\mathbb{Q}}_l
e_{i,\psi}.
\end{equation}
Then, clearly
 we have ${\rm dim}\ V_{\rho}
=q^2(q-1)^2(q^2-1).$
We define an action 
of $G_2^F \times \Gamma$
on the space $V_{\rho}.$
Let $g=
{\footnotesize \left(
\begin{array}{cc}
a_0+a_1\pi & b_0+b_1\pi \\
c_0+c_1\pi & d_0+d_1\pi
\end{array}
\right)}
 \in G_2^{F}.$
 Let $i=(x_0,y_0) \in 
 \mathfrak{S}
 ^{\mathbb{F}_q
 ^{\times}}_{00}.$
 We set $i\bar{g}:
 =(a_0x_0+c_0y_0,b_0x_0+d_0y_0) 
 \in 
 \mathfrak{S}
 ^{\mathbb{F}^{\times}_q}_{00}.$
 Recall $f(i,g)$ in (\ref{g_1}).
Then, 
we define a $G_2^F$-action 
as follows
\begin{equation}\label{g_d}
G_2^F \ni g:\ e_{i,\psi} \mapsto
\psi_{{{\rm det}(\bar{g})}}
(f(i,g^{-1}))
e_{i\bar{g}^{-1},
\psi_{{\rm det}(\bar{g})}}.
\end{equation}  
Note that the $G_2^F$-action 
(\ref{g_d})
on 
$V_{\rho}$ is 
a left action.
Let $t=a_0+a_1\pi 
\in \Gamma$.
We set
$\bar{t}^{-1}i:
=(a_0x_0,a_0y_0) 
\in
 \mathfrak{S}
 ^{\mathbb{F}
 _q^{\times}}_{00}.$
Then, we define a 
$\Gamma$-action 
 as follows
\begin{equation}\label{g_t}
\Gamma \ni t:\ e_{i,\psi} \mapsto 
\psi(-(a_1/a_0)\xi)
e_{\bar{t}^{-1}i,
\psi_{a_0^{-(q+1)}}}.
\end{equation}
We denote this representation 
by $\rho_{\rm DL}.$
Let $\Gamma_{\rm stp}^{\vee} 
\subset \Gamma^{\vee}$
be the subset of
 strongly primitive 
 characters in 
 Definition \ref{str}.
Then, by 
(\ref{acto3}),  
 (\ref{tri}) and 
 (\ref{ft}), 
we acquire
 an isomorphism
\begin{equation}
\label{sio}
\bigoplus_{w 
\in \Gamma^{\vee}_{\rm stp}}
(H_c^2(\Tilde{X},
\overline{\mathbb{Q}}_l)_{w} 
\otimes w) 
=
H_c^2(\Tilde{X}
,\overline{\mathbb{Q}}_l)
^{\sum_{\lambda \in A'}\lambda=0}  
\simeq \rho_{\rm DL}
\end{equation}
as a $G_2^F 
\times \Gamma$-representation.

In the following, we give
 a more concrete 
 description 
of the part 
$H_c^2(\Tilde{X},
\overline{\mathbb{Q}}_l)
_{w}$
for each $w \in 
\Gamma_{\rm stp}^{\vee}.$
To do so, for each $w \in
 \Gamma_{\rm stp}^{\vee},$ 
 we will define 
 a cuspidal representation 
 $\pi_w$ as in 
 \cite[5.2]{AOPS}. 
Now, we fix an 
element $\zeta_0 
\in \mathbb{F}_{q^2} 
\backslash
 \mathbb{F}_q$
and an embedding 
\begin{equation}\label{eb}
\iota_{\zeta_0}:\Gamma=
(\mathcal{O}_E/
\pi^2)^{\times}
\hookrightarrow 
G_2^F
\end{equation}
\[a+b\zeta_0 \mapsto 
a1_{2}+b{\footnotesize 
\left(
\begin{array}{cc}
\zeta_0^q+\zeta_0 & 1 \\
-\zeta_0^{q+1} & 0
\end{array}
\right)}
\]
with $a,b \in 
(\mathcal{O}_F/\pi^2).$
We identify 
$\Gamma \simeq 
\mathbb{F}_{q^2}^{\times} 
\times
\mathbb{F}_{q^2}$
by $a_0+a_1\pi 
\mapsto (a_0,(a_1/a_0)).$
For a character 
$\psi \in 
\mathbb{F}_{q^2}^{\vee} 
\backslash 
\mathbb{F}_q^{\vee}$
and 
an element 
$\zeta 
\in \mathbb{F}_{q^2} 
\backslash \mathbb{F}_q,$ 
we define a 
character 
$\psi_{\zeta}$ of 
$N$ by the following 
\[ 
\psi_{\zeta}:
N \ni {\footnotesize \left(
\begin{array}{cc}
1+\pi a_1 & \pi b_1 \\
\pi c_1 & 1+\pi d_1
\end{array}
\right)
\mapsto 
\psi\biggl(-\frac{a_1\zeta+c_1
-\zeta^q(b_1\zeta+d_1)}
{\zeta^q-\zeta}\biggr).}
\]
Then, by (\ref{eb}),
the restriction 
of $\psi_{\zeta_0} 
\in N^{\vee}$ 
to a subgroup 
$\mathbb{F}_{q^2}
=\Gamma \cap N
\subset N$ is 
equal to $\psi.$
For a strongly 
primitive character 
$w=(\chi,\psi) \in \Gamma^{\vee}
\simeq (\mathbb{F}_{q^2}^{\times})
^{\vee} \times 
\mathbb{F}_{q^2}^{\vee}$
i.e. $\psi \notin 
\mathbb{F}_q^{\vee}$,
we define 
a character 
$w$ of 
$\Gamma N=
\mathbb{F}^{\times}
_{q^2}N$ by
\begin{equation}
\label{cha}
w(xu)=\chi(x)\psi_{\zeta_0}(u)
\end{equation}
for all $x \in 
\mathbb{F}_{q^2}^{\times}$ 
and $u \in N.$
We set
\[
\pi_w:={\rm Ind}
^{G_2^F}_{\Gamma N}(w).
\]
Then, $\pi_w$ is 
a cuspidal 
representation 
of $G_2^F$ by Definition 
\ref{cus}.
Recall that 
we have 
${\rm dim}\ 
\pi_w=q(q-1).$
See 
\cite[subsection 5.2]{AOPS}
 for more details.

Again, we consider 
the $G_2^F
 \times \Gamma$
-representation
 ${\rho}_{\rm DL}.$
 In the following, we give 
 a decomposition of $V_{\rho}$ 
 to
 irreducible omponents.
 To do so, we define 
 several subspaces 
 $\{W_w\}_
 {w \in \Gamma^{\vee}_{\rm stp}}$
 of $V_{\rho},$
 and prove $W_w 
 \simeq 
 \pi_w^{\vee} \times w$
  as a $G_2^F \times \Gamma$
  -representation.
We choose
$w \in \Gamma^{\vee}_{\rm stp}$
 and 
 write 
 $w=(\chi,\psi)$ with $\chi \in 
(\mathbb{F}_{q^2}^{\times})^{\vee}$
and $\psi \in \mathbb{F}_{q^2}^{\vee} 
\backslash 
\mathbb{F}_q^{\vee}.$
We fix an element $y_0 \in \mathbf{F}$ 
such that $y_0^{q^2-1}=-1.$
For each $\zeta \in 
\mathbb{F}_{q^2} 
\backslash 
\mathbb{F}_q$,
we define a 
vector of $V_{\rho}$
in 
(\ref{kin})
\[
e^w_{\zeta}:=
\sum_{\mu \in 
\mathbb{F}_{q^2}}
\chi^{-1}(\mu)
e_{(\zeta\mu y_0,\mu y_0),
\psi_{-((\mu y_0)^{q+1}
(\zeta^q-\zeta))^{-1}}} 
\in V_{\rho}
\]
and set $W^{\zeta}_w:
=\overline
{\mathbb{Q}}_le^w_{\zeta}
 \subset V_{\rho}.$
Furthermore, we put
\[
W_w:=\bigoplus_{\zeta 
\in \mathbb{F}_{q^2} 
\backslash \mathbb{F}_q}
W^{\zeta}_w \subset V_{\rho}.
\]
We can easily check
the following isomorphism
\begin{equation}\label{ve}
V_{\rho} \simeq \bigoplus
_{w \in \Gamma
_{\rm stp}^{\vee}}W_w
\end{equation}
as a 
$\overline{\mathbb{Q}}_l$
-vector space. 
For an element 
$g=
{\footnotesize \left(
\begin{array}{cc}
a_0+a_1\pi & b_0+b_1\pi \\
c_0+c_1\pi & d_0+d_1\pi
\end{array}
\right)} \in G_2^F,$
we obtain the following 
by (\ref{g_d}) 
\begin{equation}\label{poa}
g^{-1} e^w_{\zeta}
=\chi(b_0\zeta+d_0)
\psi_{{\rm det}
(\bar{g})^{-1}}\biggl(
-\frac{f((\zeta 
\mu y_0,\mu y_0),g)}
{(\mu y_0)^{q+1}
(\zeta^q-\zeta)}\biggr)
e^w_{\frac{a_0\zeta+c_0}{b_0\zeta+d_0}}.
\end{equation}
Note that 
$f((\zeta \mu y_0,
\mu y_0),g)/
(\mu y_0)^{q+1}(\zeta^q-\zeta)$ 
is independent 
of $\mu.$
Hence, in particular, 
we obtain  
$g e^w_{\zeta}
=\psi^{-1}_{\zeta}(g)
e^w_{\zeta}$ for
$g \in N$ and $t e^w_{\zeta_0}
=\chi^{-1}(t)e^w_{\zeta_0}$
for $t \in 
\mathbb{F}_{q^2}^{\times} 
\subset \Gamma 
\subset G_2^F$.
On the other hand, 
by (\ref{g_t}), 
we can easily check 
$t e^w_{\zeta}
=w(t)e^w_{\zeta}$
for $t \in \Gamma.$
Hence, by 
(\ref{poa}), 
$W_w$ is a 
$G_2^F \times 
\Gamma$-representation.
By (\ref{poa}),  
the stabilizer of 
the subspace
$W^{\zeta_0}_w$
 in $G_2^F$
is equal 
to $\Gamma N.$
 Moreover, we acquire 
 $W^{\zeta_0}_w \simeq w^{-1}$
 as a 
 $\Gamma N$-representation.
Since 
$G_2^F$ 
permutes 
 the 
 subspaces
  $\{W^{\zeta}_w\}_{\zeta \in
  \mathbb{F}_{q^2}
   \backslash \mathbb{F}_q}$
transitively 
again by (\ref{poa}), we obtain 
$W_w \simeq \pi_w^{\vee} \otimes w$ 
as a $G_2^F \times 
\Gamma$-representation.

Now, we have 
the following 
proposition.
\begin{proposition}\label{sp}
Let the notation 
be as above.
Then, we have 
the followings\ ; 
\\1.\ The following isomorphism 
as a $G_2^F$-representation holds
 \[
 H_c^1(\Tilde{X},
 \overline{\mathbb{Q}}_l)_w \simeq 
 \pi_w^{\vee}.
 \]
\\2.\ We have the 
following isomorphism
\[
{\rho}_{\rm DL} 
\simeq 
\bigoplus_{w \in \Gamma^{\vee}_{\rm stp}}
(\pi_w^{\vee} \otimes w)
\]
as a $G_2^F \times 
\Gamma$-representation.
\end{proposition}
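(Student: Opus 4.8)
The plan is to read off both statements from the isomorphism \eqref{sio}, which already identifies $H_c^2(\Tilde{X},\overline{\mathbb{Q}}_l)^{\sum_{\lambda \in A'}\lambda=0}$ with $\rho_{\rm DL}$ as a $G_2^F \times \Gamma$-representation, combined with the decomposition \eqref{ve} of $V_\rho$ into the $G_2^F \times \Gamma$-stable subspaces $W_w$ and the identification $W_w \simeq \pi_w^\vee \otimes w$ obtained just before the proposition. Thus the argument is essentially a matter of comparing $\Gamma$-isotypic components on the two sides, and no new geometric input is needed.

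Concretely, I would first recall that a strongly primitive character $w$ satisfies $w|_{A'} \neq 1$, so the $w$-isotypic subspace of $H_c^2(\Tilde{X},\overline{\mathbb{Q}}_l)$ for the $\Gamma$-action is contained in the summand on which $\sum_{\lambda \in A'}\lambda$ acts by $0$; hence \eqref{sio} rewrites as
\[
\bigoplus_{w \in \Gamma^{\vee}_{\rm stp}}\bigl(H_c^2(\Tilde{X},\overline{\mathbb{Q}}_l)_w \otimes w\bigr) \simeq \rho_{\rm DL}
\]
as $G_2^F \times \Gamma$-representations. On the other hand, by \eqref{ve} the underlying space $V_\rho$ of $\rho_{\rm DL}$ is the direct sum of the subspaces $W_w$, and on $W_w$ the torus $\Gamma$ acts through the character $w$, as the identity $t e^w_\zeta = w(t) e^w_\zeta$ shows; therefore $W_w$ is exactly the $w$-isotypic component of $\rho_{\rm DL}$ for $\Gamma$, and we have $W_w \simeq \pi_w^\vee \otimes w$. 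Matching $w$-isotypic components on the two sides of the displayed isomorphism gives
\[
H_c^2(\Tilde{X},\overline{\mathbb{Q}}_l)_w \otimes w \simeq \pi_w^\vee \otimes w
\]
as $G_2^F \times \Gamma$-representations for every $w \in \Gamma^{\vee}_{\rm stp}$; cancelling the one-dimensional $\Gamma$-factor $w$ yields the first assertion, and taking the direct sum of these isomorphisms over $w \in \Gamma^{\vee}_{\rm stp}$ yields the second.

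The only point requiring genuine care --- which is precisely the content already built into \eqref{sio} and into the identification $W_w \simeq \pi_w^\vee \otimes w$ --- is to make sure all the twists are correct. One has to verify that the isomorphism \eqref{ft} carries the $G_2^F$-action on $H_c^2(\mathfrak{S}_\ast,\overline{\mathbb{Q}}_l)$ induced by \eqref{acto3} to the action \eqref{g_d} on $V_\rho$ (the inverse $g^{-1}$ and the factor ${\rm det}(\bar{g})$ in \eqref{g_d} come from passing from a right action on $\mathfrak{S}_\ast$ to a left action on functions on it), that the $\Gamma$-action \eqref{tri} corresponds to \eqref{g_t}, and that the Clifford-theoretic step is in place: $W^{\zeta_0}_w \simeq w^{-1}$ as a $\Gamma N$-representation, $G_2^F$ permutes $\{W^{\zeta}_w\}_{\zeta}$ transitively with stabilizer $\Gamma N$ at $\zeta_0$, and hence $W_w \simeq {\rm Ind}^{G_2^F}_{\Gamma N}(w^{-1}) = \pi_w^\vee$ as a $G_2^F$-representation by Frobenius reciprocity. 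Granting these --- all established in the discussion preceding the statement via \eqref{poa} --- the assembly above completes the proof, so the only real obstacle is the careful bookkeeping of the explicit actions and the twists by $w$, rather than any further geometric argument.
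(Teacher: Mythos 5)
Your proposal is correct and follows essentially the same route as the paper: the paper's proof likewise deduces assertion 2 from \eqref{ve} together with $W_w \simeq \pi_w^{\vee} \otimes w$, and then reads off assertion 1 from \eqref{sio} by matching $\Gamma$-isotypic components. (Your replacement of $H_c^1$ by $H_c^2$ in assertion 1 is in fact the intended reading, since $H_c^1(\Tilde{X},\overline{\mathbb{Q}}_l)=0$ as shown later in the proof of Theorem \ref{fin}.)
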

\begin{proof}
The required 
assertion $2$ follows from 
(\ref{ve}) and the isomorphism 
$W_w \simeq \pi^{\vee}_w \otimes w$
as a $G_2^F \times 
\Gamma$-representation.
Therefore, we conclude that 
$H_c^1(\Tilde{X},
\overline{\mathbb{Q}}_l)_w
 \simeq \pi_w^{\vee}$
  by (\ref{sio}). 
  Hence, 
  the assertion $1$
 is proved.
\end{proof}
As a direct consequence 
of Proposition
 \ref{sp}, we obtain 
 the following corollary.
\begin{corollary}\label{Ls}
The representation 
$\rho_{\rm DL}$ contains 
all cuspidal 
representations
each with multiplicity $2.$
\end{corollary}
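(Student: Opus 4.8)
The plan is to derive Corollary \ref{Ls} directly from part 2 of Proposition \ref{sp} by unwinding the definitions of ``cuspidal representation'' and ``strongly primitive character.'' First I would recall from Proposition \ref{sp}.2 the decomposition $\rho_{\rm DL} \simeq \bigoplus_{w \in \Gamma^{\vee}_{\rm stp}}(\pi_w^{\vee} \otimes w)$, so that the $G_2^F$-representation obtained by forgetting the $\Gamma$-action is $\bigoplus_{w \in \Gamma^{\vee}_{\rm stp}} (\pi_w^{\vee})^{\oplus 1}$, with one summand for each strongly primitive character $w$. Here $\pi_w = {\rm Ind}^{G_2^F}_{\Gamma N}(w)$ is cuspidal by Definition \ref{cus}, as was already noted in the paragraph preceding the proposition.

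Next I would invoke the parametrization recalled in Remark \ref{wq}.3 (due to \cite[Theorem B]{AOPS}): there is a canonical bijective correspondence between cuspidal representations of $G_2^F$ (up to equivalence) and ${\rm Gal}(E/F)$-orbits of strongly primitive characters of $\Gamma$. Each such Galois orbit has size exactly $2$, since $\tau$ acts without fixed points on strongly primitive characters --- a fixed character would factor through the norm, hence be trivial on $\mathbb{F}_{q^2}$ modulo the kernel of ${\rm Tr}_{\mathbb{F}_{q^2}/\mathbb{F}_q}$ after restriction, contradicting strong primitivity. Moreover, for $w$ and its conjugate $w^{\tau}$ one has $\pi_w \simeq \pi_{w^{\tau}}$ as $G_2^F$-representations, because ${\rm Ind}^{G_2^F}_{\Gamma N}(w)$ depends only on the Galois orbit of $w$ (the embedding $\iota_{\zeta_0}$ of $\Gamma$ into $G_2^F$ can be composed with an element of $G_2^F$ normalizing $\Gamma N$ and inducing $\tau$ on $\Gamma$). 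Therefore the dual $\pi_w^{\vee} \simeq \pi_{w^{\tau}}^{\vee}$ as well. Consequently, grouping the index set $\Gamma^{\vee}_{\rm stp}$ of the direct sum into Galois orbits $\{w, w^{\tau}\}$, each cuspidal representation $\pi_w^{\vee}$ appears exactly twice in $\rho_{\rm DL}$ as a $G_2^F$-representation, and --- running the bijection in the other direction --- every cuspidal representation of $G_2^F$ arises this way. A dimension check confirms this: $\dim \rho_{\rm DL} = |\Gamma^{\vee}_{\rm stp}| \cdot q(q-1) = q^2(q-1)^2(q^2-1)$ matches the count in the excerpt, and there are $q(q-1)(q^2-1)/2$ cuspidal representations (Remark \ref{wq}.3) each of dimension $q(q-1)$, so $2 \cdot \tfrac{q(q-1)(q^2-1)}{2} \cdot q(q-1) = q^2(q-1)^2(q^2-1)$ as well.

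The one point requiring genuine care --- the main obstacle --- is the claim that $\pi_w \simeq \pi_{w^{\tau}}$ as $G_2^F$-representations, i.e.\ that the induced representation truly depends only on the Galois orbit and not on the chosen character within it. I would establish this by exhibiting an explicit element $\sigma \in G_2^F$ that normalizes $\Gamma N$ and whose conjugation action on $\Gamma \subset G_2^F$ (via $\iota_{\zeta_0}$) realizes $\tau$; then conjugation by $\sigma$ carries ${\rm Ind}^{G_2^F}_{\Gamma N}(w)$ to ${\rm Ind}^{G_2^F}_{\Gamma N}(w \circ {\rm conj}_\sigma) = {\rm Ind}^{G_2^F}_{\Gamma N}(w^{\tau})$. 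Alternatively, and perhaps more cleanly, this is exactly the content of \cite[Theorem B]{AOPS} cited in Remark \ref{wq}.3, so one may simply quote it. Having fixed this, the corollary follows by combining Proposition \ref{sp}.2 with the orbit-counting above, and the proof is short.

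\begin{proof}
By Proposition \ref{sp}.2, as a $G_2^F$-representation we have
\[
\rho_{\rm DL} \simeq \bigoplus_{w \in \Gamma^{\vee}_{\rm stp}} \pi_w^{\vee}.
\]
As recalled in Remark \ref{wq}.3 (cf.\ \cite[Theorem B]{AOPS}), $\pi_w = {\rm Ind}^{G_2^F}_{\Gamma N}(w)$ depends, up to isomorphism, only on the ${\rm Gal}(E/F)$-orbit of the strongly primitive character $w$; in particular $\pi_w^{\vee} \simeq \pi_{w^{\tau}}^{\vee}$. Since a strongly primitive character cannot be $\tau$-fixed (a $\tau$-fixed character of $\Gamma$ factors through ${\rm Nr}_{E/F}$, hence its restriction to $\mathbb{F}_{q^2} = {\rm Ker}(\Gamma \to (\mathcal{O}_E/\pi)^{\times})$ factors through ${\rm Tr}_{\mathbb{F}_{q^2}/\mathbb{F}_q}$, contradicting Definition \ref{str}), every ${\rm Gal}(E/F)$-orbit in $\Gamma^{\vee}_{\rm stp}$ has exactly $2$ elements. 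Grouping the sum over $\Gamma^{\vee}_{\rm stp}$ into such orbits, each cuspidal representation $\pi_w^{\vee}$ occurs with multiplicity exactly $2$ in $\rho_{\rm DL}$, and by the bijection of Remark \ref{wq}.3 between cuspidal representations of $G_2^F$ and ${\rm Gal}(E/F)$-orbits of strongly primitive characters, every cuspidal representation occurs. (As a check: $\dim \rho_{\rm DL} = q^2(q-1)^2(q^2-1) = 2 \cdot \tfrac{q(q-1)(q^2-1)}{2}\cdot q(q-1)$, twice the total dimension of all cuspidal representations.) This proves the corollary.
\end{proof}
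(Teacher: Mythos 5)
Your argument is correct and is essentially the paper's own proof: combine Proposition \ref{sp}.2 with the fact (Remark \ref{wq}.3) that $\pi_w \simeq \pi_{w^{\tau}}$ and that the ${\rm Gal}(E/F)$-orbits of strongly primitive characters parametrize the cuspidal representations. The extra details you supply --- that no strongly primitive character is $\tau$-fixed, so each orbit has size exactly $2$, plus the dimension check --- are correct and only make explicit what the paper leaves implicit.
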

\begin{proof}
Let $\tau \neq 1 \in 
{\rm Gal}(E/F)$.
Then, 
we have $\pi_{w} 
\simeq \pi_{w^{\tau}}$
as a $G_2^F$-representation 
as mentioned 
 in Remark 
 \ref{wq}.3.
Hence, the 
required assertion 
follows from Proposition
\ref{sp}.2.
\end{proof}
We call $\rho_{\rm DL}$
{\it the unramified 
Lusztig 
represenation
 for $G_2^F.$}

\begin{remark}\label{pl} 
We study the part of 
$H_c^j(\Tilde{X},
\overline{\mathbb{Q}}_l)$
where $A'$ acts trivially.
Clearly, we have 
$H_c^j(\Tilde{X},
\overline{\mathbb{Q}}_l)^{A'=1}
\simeq 
H_c^j(A' \backslash \mathfrak{S},
\overline{\mathbb{Q}}_l)$
for any $j.$
By (\ref{fi}), 
$\mathfrak{S}$ is 
identified with 
$\{(x_0,y_0,s_0,s_1) 
\in \mathfrak{S}_0
 \times \mathbf{F}^2\
 |\ s_0^q+s_0+s_1^qt_0 
 \in \mathbb{F}_q\}.$
Then, $A' \ni 1+a_1\pi$ 
acts on $\mathfrak{S}$
by $(x_0,y_0,s_0,s_1)
 \mapsto 
 (x_0,y_0,s_0+a_1,s_1).$
 Hence, the quotient $A'
  \backslash \mathfrak{S}$
 is isomorphic to a 
 disjoint union of $q$ copies of 
 ${\mathfrak{S}_{0}} \times 
 \mathbb{A}^1 \ni 
 ((x_0,y_0),s_1).$
We denote by $d_2$ 
the composite 
$G_2^F \overset{\rm det}{\to}
(\mathcal{O}_F/\pi^2)^{\times}
\simeq \mathbb{F}^{\times}_q 
\times \mathbb{F}_q 
\overset{{\rm pr}_2}{\to}
\mathbb{F}_q.$
We write 
$t_2$ for the composite 
$\Gamma \simeq 
\mathbb{F}_{q^2}^{\times} 
\times \mathbb{F}_{q^2} 
\overset{{\rm pr}_2}{\to} 
 \mathbb{F}_{q^2} 
 \overset{{\rm Tr}}
 {\longrightarrow }
  \mathbb{F}_q.$
Then, on the set 
$\mathbb{F}_q \ni i$ 
of connected 
components of 
$A' \backslash 
\mathfrak{S}$,
$G_2^F \times \Gamma \ni 
(g,t)$ 
acts by $i 
\mapsto d_2(g)t_2^{-1}(t) i.$
We put 
$M^j:=H_c^{j}(\mathfrak{S}_{0},
\overline{\mathbb{Q}}_l).$
On $M^j,$ the group 
$G_2^F \times \Gamma$
acts by 
factoring through
 $G_2^F \times \Gamma
 \to G_1^F 
\times 
\mathbb{F}
^{\times}_{q^2}.$
Recall that the group
$G_1^F \times 
\mathbb{F}_{q^2}^{\times} \ni
\bigl(g={\footnotesize \left(
\begin{array}{cc}
a_0 & b_0 \\
c_0 & d_0
\end{array}
\right)},\zeta
\bigr)$ acts 
on $\mathfrak{S}_0 \ni 
(x_0,y_0)$
by 
$g:(x_0,y_0) \mapsto
(a_0x_0+c_0y_0,
b_0x_0+d_0y_0)$
and $\zeta:
(x_0,y_0) \mapsto
(\zeta^{-1}x_0,\zeta^{-1}y_0).$
Then, we obtain 
an isomorphism 
$H_c^j
(A' \backslash \mathfrak{S},
\overline{\mathbb{Q}}_l) 
\simeq 
\bigoplus
_{\chi \in 
\mathbb{F}^{\vee}_q}
(M^{j-2} 
\otimes \chi \circ d_2 
\otimes \chi^{-1} 
\circ t_2)$
as a $G_2^F 
\times 
\Gamma$
-representation.
Hence, in particular, 
on the part 
$H_c^3(\Tilde{X},
\overline{\mathbb{Q}}_l)^{A'=1}$, 
 the 
 classical Deligne-Lusztig 
correspondence 
for $G_1^F$ realizes.
\end{remark}

\begin{remark}\label{s_1}
Let $S:={\rm SL}_2(\mathbf{F})$
and $N_0:={\rm Ker}\ 
({S}_2^F 
\to {S}^F_1).$
Clearly, we have an
 isomorphism
$N_0 \simeq 
\{g \in {\rm M}_2(\mathbb{F}_q)\ |\ 
{\rm Tr}_{{\rm M}_2
(\mathbb{F}_q)/\mathbb{F}_q}
(g)=0\}$ as a group, 
 and hence
$N_0$ is 
an abelian group of order $q^3.$
As 
in \cite[3.2]{Lus},
we consider
 the 
 Lusztig surface 
$\Tilde{X}_0:
=\{g \in S_2\ 
|\ gF(g)^{-1} 
\in U_2v\}$ 
for 
$S_2^F.$
More explicitly, 
$\Tilde{X}_0$
is isomorphic to 
the following 
\[
\mathfrak{S}^0:=
\{(x_0,x_1,y_0,y_1) 
\in 
\mathbf{F}^{4}\ |\ 
x_0y_0^q-x_0^qy_0=1,\ 
x_1y_0^q+x_0y_1^q
-x_0^qy_1-x_1^qy_0=0\}
\]
as in 
(\ref{coo1}).
In this case, 
the torus $\Gamma$ 
is replaced by 
$\Gamma_0:=\{(a_0,a_1) 
\in \mathbf{F}^2\ |\ 
a_0^{q+1}=1,\ 
a_0^qa_1+a_0a_1^q=0\}$
of order $q(q+1)$
as in \cite[3.2]{Lus}.
We put  
$\mathcal{I}:={\rm Ker}\ 
({\rm Tr}_{\mathbb{F}_{q^2}
/\mathbb{F}_q}:
\mathbb{F}_{q^2}
 \to \mathbb{F}_q).$
Then, obviously, 
we have isomorphisms
$\Gamma_0 \simeq 
{\rm Ker}\ ({\rm Nr}_{E/F}:\Gamma 
\to 
(\mathcal{O}_F/\pi^2)
^{\times})$ and 
$A' \simeq \mathcal{I}$
as an abelian
 group. 
Furthermore, 
 $\Gamma_0$ contains 
 $A'$ as a subgroup.
 For a character $w
  \in \Gamma_0^{\vee},$ 
  we say that $w$
  is {\it primitive} 
  if $w|_{A'} \neq 1.$ 
  Let
   $\Gamma_{0,{\rm p}}^{\vee} \subset 
  \Gamma_0^{\vee}$ 
  be the subset
   of primitive characters.
   Obviously, 
   we have 
   a natural 
   surjection 
   $\Gamma^{\vee}_{\rm stp}
   \to \Gamma^{\vee}_{0,{\rm p}}.$
Then, the variety 
$\Tilde{X}_0$
 admits an 
action of 
$S_2^F \times 
\Gamma_0$ 
and 
the \'{e}tale 
cohomology group 
$H_c^2(\Tilde{X}_0,
\overline{\mathbb{Q}}_l)$
does so.
Let $\rho^0_{\rm DL}$ denote 
the ${S}_2^F$-representation
$H_c^2(\Tilde{X}_0,
\overline{\mathbb{Q}}_l)
^{\sum
_{\lambda \in A'}
\lambda=0}.$
Note that 
we have ${\rm dim}\ 
\rho^0_{\rm DL}
=q(q-1)(q^2-1).$
The restriction 
of $\iota_{\zeta_0}$ 
in (\ref{eb}) to a
 subgroup $\Gamma_0$ induces 
 an
 embedding
$\Gamma_0 
\hookrightarrow S_2^F$ 
by ${\rm det}
 \circ \iota_{\zeta_0}
 ={\rm Nr}_{E/F}.$
Any character 
$w \in 
\Gamma_{0,{\rm p}}^{\vee}$
extends to the
 character of 
 $\Gamma_0N_0$
uniquely as in 
(\ref{cha}), which
 we denote
  by the same letter $w$.
Then, we define $\pi^0_w:=
{\rm Ind}
^{{S}^F_2}
_{\Gamma_0N_0}(w).$
This representation is 
also called a 
cuspidal 
representation of 
${S}^F_2$
in \cite[3.1]{Sta} and 
all cuspidal 
representations 
arise in this way. 
See also \cite[4.2]{Sha}.
In \cite[p.37]{Sha},
 a cuspidal 
 representation 
 of $S_2^F$
is called an 
{\it unramified} 
representation.
All
 cuspidal 
 representations 
have degree 
$q(q-1)$
and the number 
of them 
is $(q^2-1)/2$
by loc.\ cit.
Similarly as in 
Proposition \ref{sp},
we have the following isomorphism
\[
\rho^0_{\rm DL} \simeq 
\bigoplus_{w \in 
\Gamma^{\vee}_{0,{\rm p}}}
({\pi^0_w}^{\vee} \otimes w).
\]
Hence, as proved 
in \cite[3.4]{Lus}, 
the representation 
$\rho^0_{\rm DL}$
contains all 
cuspidal 
representations 
of $S_2^F$ 
each with 
multiplicity $2.$
\end{remark}

\section{
Review of \cite[7.2]{IMT}
}
In this section, 
we recall the results
in \cite[7.2]{IMT} and 
prove Proposition \ref{gey}
 by using the 
 results in the previous section.
Let $\mathfrak{S}
^{\mathbb{F}^{\times}_q}_{00}$
be as in the previous section.
For $i=(x_0,y_0) \in 
\mathfrak{S}^{\mathbb{F}^{\times}_q}_{00},$
we set $\xi:=x_0^qy_0-x_0y_0^q.$
Then for each $i \in 
\mathfrak{S}^{\mathbb{F}^{\times}_q}_{00}, 
$
let $X_i$ denote the smooth 
compactification of the following 
affine curve $X^{q^2}-X=\xi
(Y^{q(q+1)}-Y^{q+1}).$
Then, $X_i$ has $q$ connected components
and each component 
has genus $q(q-1)/2.$
In loc.\ cit., we prove that 
the components $X_i$
for $i \in
 \mathfrak{S}
 ^{\mathbb{F}^{\times}_q}_{00}$
 apppear in the stable reduction of 
 the Lubin-Tate curve 
 $\mathcal{X}(\pi^2).$
Furthermore, 
in loc.\ cit., 
we analyze the following 
\'{e}tale cohomology group 
\begin{equation}\label{zent}
W :=\bigoplus_{i 
\in \mathfrak{S}^{\mathbb{F}^{\times}_{q}}
_{00}}H^1(X_i,\overline{\mathbb{Q}}_l).
\end{equation}
Then, $W$ has a left action of 
$\mathbf{G}:=G_2^F \times 
\mathcal{O}_3^{\times} 
\times I_F$. 
We consider 
$\Gamma$ as a subgroup of 
$\mathcal{O}_3^{\times}$. 
We will observe that 
the restriction of $W$
to a subgroup $G_2^F 
\times \Gamma \times \{1\}
\subset \mathbf{G}$ 
is related to
the unramified
 Lusztig reprensentation
 $\rho_{\rm DL}$
 computed in the 
 previous section.
The restriction $W|_{G_2^F \times \{1\} 
\times I_F}$ is also written
 with respect to 
 $\rho_{\rm DL}.$
See Proposition \ref{gey} 
for precise statements.

As in the previous section, 
we assume (\ref{asd}).
Since we have $|\mathfrak{S}
^{\mathbb{F}^{\times}_q}_{00}|
=|G_1^F|
=q(q^2-1)(q-1)$ and
${\rm dim}\ H^1(X_i,
\overline{\mathbb{Q}}_l)
=q^2(q-1)$
for each $i$,  
we have ${\rm dim}\ W=q^3(q-1)^2(q^2-1)$.
In the following, we write down 
 the right 
action of $\mathbf{G}$
on the components $\{X_i\}
_{i \in \mathfrak{S}^
{\mathbb{F}^{\times}_{q}}
_{00}}$ given 
in loc.\ cit.
This action induces the 
{\it left} 
$\mathbf{G}$-action 
on $W$ as mentioned above. 
First, we recall the action of 
$G_2^F$. 
Let $g 
={\footnotesize \left(
\begin{array}{cc}
a_0+a_1\pi & b_0+b_1\pi \\
c_0+c_1\pi & d_0+d_1\pi
\end{array}
\right)}
\in G_2^F.$
Then, $g$ acts on 
$\mathfrak{S}^
{\mathbb{F}^{\times}_q}_{00}$ 
as follows, 
factoring through
$G_1^F,$ 
\begin{equation}\label{cvv0}
g:\ i:=(x_0,y_0) \mapsto 
i\bar{g}:=(a_0x_0+c_0y_0,b_0x_0+d_0y_0).
\end{equation}
Furthermore, $g$ induces 
the following morphism
\begin{equation}\label{cvv1}
g:X_i \to 
X_{i\bar{g}}\ ;\ (X,Y) \mapsto 
({\rm det}(\bar{g})X+f(i,g),Y)
\end{equation}
where $f(i,g)$ is 
defined in 
(\ref{g_1}).
Compare (\ref{cvv1}) with 
(\ref{acto3}).
Secondly, 
we recall the action of 
$\mathcal{O}^{\times}_3$. 
Let $b=a_0+\varphi b_0+\pi a_1 
\in \mathcal{O}^{\times}_3$ with 
$a_0 \in 
\mathbb{F}_{q^2}^{\times}$
and $a_1,b_0 \in \mathbb{F}_{q^2}.$
Then, $b$ acts 
on $\mathfrak{S}^{\mathbb{F}^{\times}_q}_{00}$
as follows
\begin{equation}\label{dcc1}
i=(x_0,y_0) \mapsto
 ib:=(a_0^{-1}x_0,a_0^{-1}y_0).
\end{equation}
Moreover, $b$ induces 
a morphism
\[
X_i \to X_{ib}\ ;\ 
(X,Y) \mapsto 
\bigl(a_0^{-(q+1)}
\bigl(X-
(b_0a_0^{-1})\xi 
Y+(a_1a_0^{-1})\xi\bigr),
a_0^{q-1}
\bigl(Y-(b_0a_0^{-1})^q\bigr)\bigr).
\]
In particular, 
the element $t:=a_0+\pi a_1 
\in \Gamma 
\subset 
\mathcal{O}^{\times}_3$ 
acts as follows
\begin{equation}\label{dcc2}
X_i \to X_{it}\ ;\ 
(X,Y) \mapsto 
\bigl(a_0^{-(q+1)}
\bigl(X+(a_1a_0^{-1})\xi\bigr),
a_0^{q-1}Y\bigr).
\end{equation}
Compare this action with (\ref{tri}).
Thirdly, we recall the 
inertia action $I_F.$
We define a homomorphism
\begin{equation}\label{inn}
\mathbf{a}_E:I_F \to 
I_F^{\rm ab} 
\simeq I_E^{\rm ab}
\overset{\mathbf{a}_{E}
|_{I^{\rm ab}_E}}
{\longrightarrow} 
\mathcal{O}^{\times}_E
 \overset{\rm can.}{\to} 
 \Gamma 
\simeq 
\mathbb{F}_{q^2}^{\times}
 \times 
\mathbb{F}_{q^2}\ ;\ 
\sigma \mapsto 
(\zeta(\sigma),\lambda(\sigma)).
\end{equation}
Then, $\sigma 
\in I_{F}$ acts 
on $\mathfrak{S}
^{\mathbb{F}
^{\times}_{q}}_{00}$
as follows
\begin{equation}\label{inn1}
(x_0,y_0) \mapsto 
 i \sigma:
 =(\zeta(\sigma)^{-1}x_0,
 \zeta(\sigma)^{-1}y_0).
\end{equation}
Moreover, $\sigma$ induces a morphism
\begin{equation}\label{inn2}
X_{i} \to X_{i \sigma}\ ;\ 
(X,Y) \mapsto 
(\zeta(\sigma)^{-(q+1)}(X+\lambda(\sigma)\xi),Y).
\end{equation}
The product group
$\mathbb{F}_{q^2} \times \mu_{q+1} \ni (a,\zeta)$
acts on $X_i$ 
with $i 
\in \mathfrak{S}
^{\mathbb{F}^{\times}_q}_{00}$ by 
$(X,Y) \mapsto (X+a,\zeta Y).$
Then, we can decompose 
$H^1(X_i,\overline{\mathbb{Q}}_l)$
to a direct sum of some 
 characters of 
$\mathbb{F}_{q^2} \times \mu_{q+1}$
as follows
\begin{equation}\label{po}
H^1(X_i,\overline{\mathbb{Q}}_l)
\simeq 
\bigoplus_{\chi_0 \in
 \mu_{q+1}^{\vee} 
 \backslash \{1\}}
 \bigoplus_
 {\psi \in 
 \mathbb{F}^{\vee}_{q^2} 
 \backslash 
 \mathbb{F}^{\vee}_q}
 \overline{\mathbb{Q}}_l
 e_{i,\psi,\chi_0}.
\end{equation}
See \cite[Corollary 7.5]{IMT} for 
a proof of (\ref{po}).
By (\ref{po}),
we acquire the following isomorphism
\begin{equation}\label{r_t}
W \simeq \bigoplus_{i \in 
\mathfrak{S}^{\mathbb{F}
^{\times}_q}_{00}}
\bigoplus_{\chi_0 \in \mu_{q+1}^{\vee} 
\backslash \{1\}}
\bigoplus_{\psi 
\in \mathbb{F}
^{\vee}_{q^2} \backslash
 \mathbb{F}_q
 ^{\vee}}\overline{\mathbb{Q}}_l
 e_{i,\psi,\chi_0}
\end{equation}
as a 
$\overline{\mathbb{Q}}_l$-vector space.
Explicit
 descriptions 
 of the 
$\mathbf{G}$-action 
on the right
 hand side of 
 (\ref{r_t}), which is induced 
 by the $\mathbf{G}$-action
 on $W,$ 
are given in \cite[(7.17)-(7.19)]{IMT}.
Now, we write down them.
We have the 
following $G_2^F$-action
\begin{equation}\label{da1}
G_2^F \ni g\ 
:\ e_{i,\psi,\chi_0} \mapsto 
\psi_{{\rm det}(\bar{g})}
(f(i,g^{-1}))
e_{ig^{-1},
\psi_{{\rm det}(\bar{g})},
\chi_0}
\end{equation}
and the following action of 
$\Gamma \subset 
\mathcal{O}_{3}^{\times}$ 
\begin{equation}\label{da2}
\Gamma \ni t=a_0+a_1\pi:
\ e_{i,\psi,\chi_0} \mapsto
\psi(-({a}_1/{a}_0)\xi)
\chi_0^{-1}(t)e_{it^{-1},
\psi_{{a}_0^{-(q+1)}},
\chi_0}.
\end{equation}
Furthermore, we have the 
following $I_F$-action
\begin{equation}\label{da3}
I_F \ni \sigma\ :\
 e_{i,\psi,\chi_0} \mapsto
\psi(-\lambda(\sigma)\xi)e_{i \sigma^{-1}, 
\psi_{\zeta(\sigma)^{-(q+1)}}, 
\chi_0}.
\end{equation}
These descriptions of 
the $\mathbf{G}$-action 
are 
almost direct
 consequences of the 
 $\mathbf{G}$-action
 given 
in (\ref{cvv0})-(\ref{inn2}).
Now, 
we have the following proposition.
\begin{proposition}\label{gey}
Let $\nu:\Gamma \to \mu_{q+1}$ be 
a surjective homomorphism defined by 
$a_0+a_1\pi \mapsto a_0^{q-1}.$ 
For a character 
$\chi_0 \in 
\mu_{q+1}^{\vee},$
we denote by 
the same 
letter $\chi_0$ 
for the composite
$\chi_0 \circ \nu 
\in \Gamma^{\vee}.$
\\1.\ Then, we have 
the following 
isomorphism
\[
W|_{G_2^F \times \Gamma
 \times \{1\}} \simeq 
\bigoplus_{\chi_0 
\in \mu_{q+1}^{\vee}
\backslash \{1\}}
(\rho_{\rm DL} \otimes 
\chi_0^{-1})
\]
as a $G_2^F \times 
\Gamma$-representation.
\\2.\ 
Let $\tilde{\rho}_{\rm DL}$ 
denote the
inflation to $G_2^F \times I_F$ 
of the $G_2^F
 \times \Gamma$-representation
 $\rho_{\rm DL}$
  by a map
 ${\rm id} \times 
 \mathbf{a}_E:
 G_2^F \times 
 I_F 
 \to G^F_2 
 \times \Gamma$.
Then, we have the 
following isomorphism
\[
W|_{G_2^F \times \{1\} 
\times I_F} 
\simeq (\tilde{\rho}
_{\rm DL})^{\oplus q}
\] 
as a $G_2^F \times 
I_F$-representation.
\end{proposition}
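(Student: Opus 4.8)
The plan is to prove both assertions by a direct comparison of the explicit formulas (\ref{da1})--(\ref{da3}) for the $\mathbf{G}$-action on $W$ with the defining formulas (\ref{g_d}) and (\ref{g_t}) for $\rho_{\rm DL}$, after first splitting $W$ along the decomposition (\ref{r_t}). Concretely, (\ref{r_t}) together with (\ref{po}) realizes $W$ as $\bigoplus_{\chi_0 \in \mu_{q+1}^{\vee}\setminus\{1\}} W_{\chi_0}$, where $W_{\chi_0}$ is the span of the vectors $e_{i,\psi,\chi_0}$ with $i \in \mathfrak{S}^{\mathbb{F}^{\times}_q}_{00}$ and $\psi \in \mathbb{F}^{\vee}_{q^2}\setminus\mathbb{F}^{\vee}_q$; this is exactly the isotypic decomposition of each $H^1(X_i,\overline{\mathbb{Q}}_l)$ for the action $Y \mapsto \zeta Y$ of $\mu_{q+1}$ on $X_i$. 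Since the induced morphisms (\ref{cvv1}), (\ref{dcc2}) and (\ref{inn2}) all commute with $Y \mapsto \zeta Y$, each $W_{\chi_0}$ is stable under $\mathbf{G}$, hence under $G_2^F \times \Gamma$ and under $G_2^F \times I_F$, and as a $\overline{\mathbb{Q}}_l$-vector space it is identified with $V_{\rho}$ of (\ref{kin}) via $e_{i,\psi,\chi_0} \leftrightarrow e_{i,\psi}$.

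For assertion $1$, fix $\chi_0$. The $G_2^F$-formula (\ref{da1}) is literally (\ref{g_d}) under the above identification (using $i\bar g^{-1} = ig^{-1}$, which is (\ref{cvv0})), so the $G_2^F$-action on $W_{\chi_0}$ is that of $\rho_{\rm DL}$. For $t = a_0 + a_1\pi \in \Gamma$, formula (\ref{da2}) differs from (\ref{g_t}) precisely by the scalar factor $\chi_0^{-1}(t)$, while the index shift $it^{-1}$ agrees with $\bar t^{-1}i$ by (\ref{dcc1}); under the convention that $\chi_0$ also denotes $\chi_0 \circ \nu \in \Gamma^{\vee}$, this scalar is exactly a twist by the character $\chi_0^{-1}$. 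Hence $W_{\chi_0} \simeq \rho_{\rm DL} \otimes \chi_0^{-1}$ as a $G_2^F \times \Gamma$-representation, and summing over $\chi_0 \in \mu_{q+1}^{\vee}\setminus\{1\}$ yields assertion $1$.

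For assertion $2$ the key observation is that the inertia morphism (\ref{inn2}) fixes the coordinate $Y$, so $\sigma \in I_F$ commutes with the $\mu_{q+1}$-action; accordingly formula (\ref{da3}) carries no $\chi_0$-twist and acts identically on every $W_{\chi_0}$. Writing $\mathbf{a}_E(\sigma) = (\zeta(\sigma), \lambda(\sigma))$ as in (\ref{inn}) and substituting $a_0 = \zeta(\sigma)$, $a_1/a_0 = \lambda(\sigma)$ into (\ref{g_t}) (with $\bar t^{-1}i = i\sigma^{-1}$ by (\ref{inn1})), formula (\ref{da3}) becomes exactly the action of $\mathbf{a}_E(\sigma) \in \Gamma$ on $\rho_{\rm DL}$; combined with (\ref{da1})$\,=\,$(\ref{g_d}) on the $G_2^F$-factor, this gives $W_{\chi_0} \simeq \tilde{\rho}_{\rm DL}$ as a $G_2^F \times I_F$-representation for each of the $q = |\mu_{q+1}^{\vee}\setminus\{1\}|$ values of $\chi_0$, which is assertion $2$.

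I expect the only genuinely delicate part to be the bookkeeping: keeping the identifications $\Gamma \simeq \mathbb{F}_{q^2}^{\times}\times\mathbb{F}_{q^2}$, the homomorphism $\nu$, the reciprocity map $\mathbf{a}_E$ of (\ref{inn}), and the right-versus-left action conventions mutually consistent, so that the character twist by $\chi_0^{-1}$ genuinely appears in part $1$ and genuinely disappears in part $2$. Once those conventions are pinned down, the proof is a line-by-line matching of the displayed formulas, with no further input needed beyond Proposition \ref{sp} and the results of \cite[7.2]{IMT} recalled above.
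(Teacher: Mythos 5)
Your proposal is correct and follows essentially the same route as the paper: the paper's proof is the one-line instruction to compare the $G_2^F\times\Gamma$-action \eqref{g_d}, \eqref{g_t} on $V_{\rho}$ with the $\mathbf{G}$-action \eqref{da1}--\eqref{da3} on $W$, and your write-up is precisely that comparison carried out, with the decomposition of $W$ along the $\chi_0$-isotypic pieces of \eqref{r_t} and the counting $|\mu_{q+1}^{\vee}\setminus\{1\}|=q$ made explicit. Your bookkeeping of where the $\chi_0^{-1}$-twist appears (via $\nu$ and \eqref{dcc2} in part 1) and where it is absent (via \eqref{inn2} fixing $Y$ in part 2) matches the paper's formulas.
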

\begin{proof}
The required assertions follow 
by comparing 
the $G_2^F \times 
\Gamma$-action (\ref{g_d}) 
and (\ref{g_t}) on 
$V_{\rho}$ (\ref{kin}) 
with the $\mathbf{G}$-action 
 on $W$
 given in
  (\ref{da1})-(\ref{da3}).
\end{proof}
By Proposition \ref{gey},
we acquire the following corollary,
 which is proved in
\cite[Proposition 7.6]{IMT}.
\begin{corollary}\label{cv}
(\cite[Proposition 7.6]{IMT})
We set $\mathbf{H}:=
G_2^F \times \Gamma 
\times I_F \subset \mathbf{G}.$
Then, 
we have the 
following isomorphism
\[
W|_{\mathbf{H}}  \simeq \bigoplus_{w \in 
\Gamma_{\rm stp}^{\vee}}
\biggl(\pi_w^{\vee} \otimes 
\biggl(
\bigoplus_{\chi_0 \in \mu_{q+1}^{\vee} 
\backslash \{1\}}
w \chi_0^{-1}\biggr)  \otimes
w \circ \mathbf{a}_E
\biggr)
\]
as a $\mathbf{H}$-representation.
\end{corollary}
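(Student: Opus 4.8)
The plan is to sharpen the comparison of group actions that proves Proposition~\ref{gey}, carrying along all three factors of $\mathbf{H}=G_2^F\times\Gamma\times I_F$ at once, and then to substitute the explicit decomposition of $\rho_{\rm DL}$ obtained just before Proposition~\ref{sp}.

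First I would isolate, for each $\chi_0\in\mu_{q+1}^{\vee}\setminus\{1\}$, the subspace $W_{\chi_0}\subset W$ spanned by those basis vectors $e_{i,\psi,\chi_0}$ of (\ref{r_t}) carrying that third index. Formulas (\ref{da1})--(\ref{da3}) show that each of $G_2^F$, $\Gamma$ and $I_F$ fixes the index $\chi_0$, so $W_{\chi_0}$ is an $\mathbf{H}$-subrepresentation and $W=\bigoplus_{\chi_0\neq1}W_{\chi_0}$. Now the $G_2^F$-action (\ref{da1}) on the $e_{i,\psi,\chi_0}$ is, term by term, the $G_2^F$-action (\ref{g_d}) on the basis $e_{i,\psi}$ of $V_{\rho}$ in (\ref{kin}). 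Hence the very same linear combinations $e^w_{\zeta}$ introduced before Proposition~\ref{sp}, now formed out of the vectors $e_{i,\psi,\chi_0}$, produce a decomposition $W_{\chi_0}=\bigoplus_{w\in\Gamma^{\vee}_{\rm stp}}W_{w,\chi_0}$ in which, by the identical argument (the stabilizer in $G_2^F$ of the line through $e^w_{\zeta_0}$ is $\Gamma N$, on which the line realizes $w^{-1}$, and $G_2^F$ permutes the lines $W^{\zeta}_w$ transitively), each $W_{w,\chi_0}\simeq\pi_w^{\vee}$ as a $G_2^F$-representation.

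Next I would read off the $\Gamma$- and $I_F$-actions on $e^w_{\zeta}\in W_{w,\chi_0}$. Comparing (\ref{da2}) with the $\Gamma$-action (\ref{g_t}) on $V_{\rho}$, the only difference is the extra scalar $\chi_0^{-1}(t)$; so the computation that yields $t\,e^w_{\zeta}=w(t)\,e^w_{\zeta}$ in the $\rho_{\rm DL}$-setting now yields $t\,e^w_{\zeta}=\chi_0^{-1}(t)\,w(t)\,e^w_{\zeta}$, i.e.\ $\Gamma$ acts on $W_{w,\chi_0}$ through $w\chi_0^{-1}$ (with $\chi_0$ short for $\chi_0\circ\nu$, in the convention of Proposition~\ref{gey}). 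For $I_F$, the map $\mathbf{a}_E$ of (\ref{inn}) identifies $\sigma$ with the pair $(\zeta(\sigma),\lambda(\sigma))\in\Gamma$, and under this identification (\ref{da3}) is precisely the pullback along $\mathbf{a}_E$ of (\ref{g_t}) with the $\chi_0$-twist deleted; hence the same computation gives $\sigma\,e^w_{\zeta}=(w\circ\mathbf{a}_E)(\sigma)\,e^w_{\zeta}$. Therefore $W_{w,\chi_0}\simeq\pi_w^{\vee}\otimes(w\chi_0^{-1})\otimes(w\circ\mathbf{a}_E)$ as an $\mathbf{H}$-representation, and summing and regrouping,
\[
W=\bigoplus_{\chi_0\neq1}\bigoplus_{w\in\Gamma^{\vee}_{\rm stp}}\pi_w^{\vee}\otimes(w\chi_0^{-1})\otimes(w\circ\mathbf{a}_E)=\bigoplus_{w\in\Gamma^{\vee}_{\rm stp}}\pi_w^{\vee}\otimes\Bigl(\bigoplus_{\chi_0\in\mu_{q+1}^{\vee}\setminus\{1\}}w\chi_0^{-1}\Bigr)\otimes(w\circ\mathbf{a}_E),
\]
which is the claim.

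The work is concentrated in the middle step: one must verify that the phase factors $\psi_{{\rm det}(\bar g)}(f(i,g^{-1}))$, $\psi(-(a_1/a_0)\xi)$ and $\psi(-\lambda(\sigma)\xi)$ of (\ref{da1})--(\ref{da3}) recombine on the vectors $e^w_{\zeta}$ in exactly the way the corresponding phases do in the $V_{\rho}$-computation preceding Proposition~\ref{sp}. This is the same calculation that underlies Proposition~\ref{sp} and the proof of Proposition~\ref{gey}, so nothing genuinely new is required; but it is the point needing care, since it is where the characters $w$, $\chi_0$ and $w\circ\mathbf{a}_E$ actually appear. The splitting $W=\bigoplus_{\chi_0}W_{\chi_0}$ and the stabilizer/transitivity argument for the $W^{\zeta}_w$ are immediate from the explicit formulas.
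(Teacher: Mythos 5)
Your proof is correct and is essentially the paper's argument unpacked: the paper deduces the corollary "immediately" from Proposition \ref{gey} (whose proof is exactly the comparison of (\ref{g_d})--(\ref{g_t}) with (\ref{da1})--(\ref{da3}) that you carry out) together with the decomposition $\rho_{\rm DL}\simeq\bigoplus_{w}(\pi_w^{\vee}\otimes w)$ of Proposition \ref{sp}. If anything, your version is slightly more careful, since you track the simultaneous $G_2^F\times\Gamma\times I_F$-action on each $W_{w,\chi_0}$ rather than only the two separate restrictions recorded in Proposition \ref{gey}.
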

\begin{proof}
The required assertion
 follows from Proposition 
 \ref{sp} 
 and 
 Theorem \ref{gey} 
 immediately.
\end{proof}

\begin{remark}\label{s_2}
Let the notation be 
as in Remark \ref{s_1}.
We briefly remark 
that  
similar things to 
Theorem \ref{gey}
 for 
$S_2^F$ hold.
We set
 $\mathcal{O}
^{1,\times}_3:
={\rm Ker}\ 
({\rm Nrd}_{D/F}:
\mathcal{O}^
{\times}_3 
\to 
(\mathcal{O}_F/
\pi^2)^{\times}).$ 
Then, 
obviously,  
we have a 
natural inclusion 
$\Gamma_0 
\hookrightarrow 
\mathcal{O}^{1,\times}_3$
by ${\rm Nrd}_{D/F}|_{\Gamma}
={\rm Nr}_{E/F}.$
We write 
$\mathbf{a}_F$ for 
the composite 
$I_F \to I^{\rm ab}_F 
\overset{\mathbf{a}_F|
_{I^{\rm ab}_F}}{\to } 
\mathcal{O}^{\times}_F
\overset{\rm can.}{\to } 
(\mathcal{O}_F/\pi^2)
^{\times}.$
Then, we set
$I_F^1:={\rm Ker}\ 
(\mathbf{a}_F:I_F \to
 (\mathcal{O}_F/\pi^2)^{\times}).$
Moreover, 
the restriction 
of the map
 (\ref{inn}) 
 to a 
 subgroup $I_F^1$ 
 induces a 
 surjection 
$\mathbf{a}^0_E:{I}^{1}_F \to
 \Gamma_0$ by 
 $\mathbf{a}_F
 ={\rm Nr}_{E/F} 
 \circ 
 \mathbf{a}_E$.
Let $\mathfrak{S}_{00}:
=\{(x_0,y_0) \in 
\mathbf{F}^2\ |\ 
x_0y_0^q-x_0^qy_0=1,\ 
x_0^{q^2-1}=
y_0^{q^2-1}=-1\}.$
Then, we have 
$|\mathfrak{S}_{00}|
=|S_1^F|=q(q^2-1).$
For each $j 
\in \mathfrak{S}_{00},$
 let $X_j$ be 
 the smooth
 compactification 
 of an affine curve
  $X^q+X+Y^{q+1}=0$ 
  with genus $q(q-1)/2.$
Then, a product group
$\mathcal{I} \times \mu_{q+1} \ni 
(a,\zeta)$
acts on $X_i$ by $(X,Y) 
\mapsto (X+a,\zeta Y).$ 
Then, we consider 
\[
W_0:=
\bigoplus
_{j \in \mathfrak{S}_{00}}
H_c^1(X_j,
\overline
{\mathbb{Q}}_l) \simeq 
\bigoplus_{j \in 
\mathfrak{S}_{00}}
\bigoplus_{\chi_0 
\in \mu_{q+1}^{\vee} 
\backslash \{1\}}
\bigoplus_{\psi 
\in \mathcal{I}^{\vee} 
\backslash 
\{0\}} 
\overline{\mathbb{Q}}
_le_{j,\psi,\chi_0}
\]
where the second 
isomorphism follows from
\cite[Lemma 7.4]{IMT}.
Note that we have 
  ${\rm dim}\ 
  W_0=q^2(q-1)(q^2-1).$
As 
mentioned in section 
\ref{Lo}, 
$f(i,g)$ in
 (\ref{g_1}) is 
 contained in  
$\mathcal{I}$ 
if $g \in S_2^F.$
Similarly, for $\sigma \in I_F^1,$
we have $\zeta(\sigma)^{q+1}=1$
and 
$\lambda(\sigma) 
\in \mathcal{I}.$
For $b=a_0+\varphi b_0+a_1\pi \in 
\mathcal{O}^{1.\times}_3,$
we have $a_0^{q+1}=1$ and 
$(b_0/a_0)^{q+1}
=(a_1/a_0)^q+(a_1/a_0).$
 Hence, 
 a product group 
 $\mathbf{G}_0:=S^F_2 
 \times
  \mathcal{O}^{1,\times}_3 
  \times I_F^1$ acts 
  on the components 
  $\{X_i\}_{i \in 
  \mathfrak{S}_{00}}$ in 
  the same way as 
   (\ref{cvv0})-(\ref{inn2}). 
   Therefore, $\mathbf{G}_0$ 
   also acts on 
   $W_0$ in the same way as 
   (\ref{da1})-(\ref{da3}).  
Then, 
for $W_0$, the same 
statements as in Theorem \ref{gey}
hold by replacing $\rho_{\rm DL}$ by 
${\rho}^0_{\rm DL}$. Namely, we have 
the following isomorphism
$W_0|_{S_2^F \times 
\Gamma_0 \times \{1\}} \simeq 
\bigoplus_{\chi_0 
\in \mu_{q+1}^{\vee} \backslash \{1\}}
(\rho^0_{\rm DL} 
\otimes \chi_0^{-1})$
as a $S_2^F \times \Gamma_0$
-representation.
Let ${\tilde{\rho}^0}_{\rm DL}$
denote the inflation 
of $\rho^0_{\rm DL}$
to $S_2^F \times I_F^1$
by the map 
${\rm id} \times 
\mathbf{a}^0_E:
S_2^F \times I_F^1 \to 
S_2^F \times 
\Gamma_0.$ 
Then, we have 
an isomorphism 
$W_0|
_{S_2^F \times 
\{1\} \times I_F^1}
 \simeq 
 ({\tilde{\rho}^0}_{{\rm DL}})^{\oplus q}$
 as a $S_2^F 
 \times I_F^1$-representation.
Let $\mathbf{H}_0:=S_2^F \times 
\Gamma_0 
\times I_F^1 \subset \mathbf{G}_0.$
Then, similarly as Corollary \ref{cv},
we acquire an isomorphism 
$W_0|_{\mathbf{H}_0} \simeq 
\bigoplus_{w \in \Gamma^{\vee}_{0,{\rm p}}}
\bigl({\pi^0_w}^{\vee} \otimes 
\bigl(\bigoplus
_{\chi \in \mu^{\vee}_{q+1} 
\backslash \{1\}}
w\chi_0^{-1}\bigr) \otimes w 
\circ \mathbf{a}^0_E\bigr)$
as a $\mathbf{H}_0$-representation.
\end{remark}

\section{Main theorem and its proof}
\label{5}
In this section,
we write down the
 Lusztig curve $X_D$ for
 $\mathcal{O}_3^{\times}.$
 The Lusztig curve $X_D$ has an 
 $\mathcal{O}_3^{\times} \times 
 \Gamma$-action.
 We study the cohomology
  group 
  $H_c^1(X_D,
  \overline{\mathbb{Q}}_l)$ 
  as a $\mathcal{O}^{\times}_3
   \times \Gamma$-representation
  in Lemma \ref{lq1}.
We consider the product 
$\mathbf{X}:=\Tilde{X} \times X_D$
with an action of  
$G_2^F \times
 \mathcal{O}_3^{\times} \times \Gamma.$
 The \'{e}tale cohomology group 
 $H_c^3(\mathbf{X},\overline{\mathbb{Q}}_l)$
 is considered as a $\mathbf{G}$-representation
 via the map ${\rm id} 
 \times {\rm id} \times 
 \mathbf{a}_E:\mathbf{G} \to G_2^F \times 
 \mathcal{O}_3^{\times} \times \Gamma.$
Then, in Theorem \ref{fin}, 
we show that 
$W$ is contained in 
 the \'{e}tale
  cohomology group 
 $H_c^3(\mathbf{X},
 \overline{\mathbb{Q}}_l)$
  as a $\mathbf{G}$-subrepresentation. 

As a direct consequence of Corollary \ref{cv},
we prove that 
the following isomorphism 
holds 
\[
W \simeq \bigoplus
_{w \in \Gamma_{\rm stp}^{\vee}}
(\pi_w^{\vee} 
\otimes \rho_w 
\otimes w \circ \mathbf{a}_E)
\]
as a $\mathbf{G}$-representation in
 \cite[Corollary 7.7]{IMT}.
Here, $\rho_w$ with $w=(\chi,\psi)
 \in \Gamma_{\rm stp}^{\vee}
 \simeq (\mathbb{F}^{\times}_q)^{\vee}
  \times 
  (\mathbb{F}^{\vee}_{q^2}
   \backslash \mathbb{F}_q^{\vee})$
is an irreducible 
representation of 
$\mathcal{O}^{\times}_3$
 of degree $q$, 
 which is uniquely 
 characterized by 
 the followings
 $\rho_w|_{U}= 
 \psi^{\oplus q},\ 
 {\rm Tr}\ \rho_w(\zeta)
 =-\chi(\zeta)$ for $\zeta
  \in \mathbb{F}_{q^2} 
  \backslash \mathbb{F}_q,$
  where we set $U:=U_D^2/U_D^3 
  \simeq \mathbb{F}_{q^2} 
  \subset \mathcal{O}_3^{\times}.$
See \cite[Corollary 7.4]{IMT} and 
\cite[Lemma 16.2]{BH}
for more details on $\rho_w.$

In the following, 
we recall the Lusztig curve 
for $\mathcal{O}_3^{\times}$ 
similarly as 
in \cite[Section 2]{Lus2}.
Let ${\footnotesize 
\varphi':=\left(
\begin{array}{cc}
0 & 1 \\
\pi & 0
\end{array}
\right)} \in G_2^F.
$
We define a morphism 
$F':G_2 \to G_2$
by $F'(g)
=\varphi' F(g){\varphi'}^{-1}.$
We define as follows
\[
X_D:=\{g \in G_2\ |\ F'(g)g^{-1} \in U_2\}, 
\]
which we call the Lusztig curve 
for $\mathcal{O}_3^{\times}.$
The condition 
$F'(g)g^{-1} \in 
U_2$ is equivalent to 
$g={\footnotesize \left(
\begin{array}{cc}
\mathbf{x} & y \\
\pi F(y) & F(\mathbf{x})
\end{array}
\right)}
$ with $\mathbf{x} \in A, y
 \in \mathbf{F}$ 
 and ${\rm det}(g)
  \in (\mathcal{O}_F/\pi^2)^{\times}.$
The fixed part $G_2^{F'}$
is equal to the following
$\{[\mathbf{a},b_0]:={\footnotesize \left(
\begin{array}{cc}
\mathbf{a} & F(b_0) \\
\pi b_0 & F(\mathbf{a})
\end{array}
\right)}
\ |\ 
\mathbf{a} \in 
(\mathbb{F}_{q^2}
[[\pi]]/\pi^2)^{\times}, 
b_0 \in \mathbb{F}_{q^2}
\}.$
Hence, we fix the following
 isomorphism 
$G_2^{F'} \overset{\sim }{\to }
\mathcal{O}_3^{\times}\ ;\ 
[\mathbf{a},b_0] \mapsto 
\mathbf{a}+\varphi b_0.$
Then, the group
 $G_2^{F'} \times 
 \Gamma \ni (b,t)$
 acts on $X_D \ni g$ 
 by $g \mapsto 
 {\footnotesize 
 \left(
\begin{array}{cc}
t & 0 \\
0 & F(t)
\end{array}
\right)
}^{-1}gb$ on the right.
We set as follows 
\[
{\mathfrak{S}}_D:
=\{\mathbf{x} \in 
(\mathcal{O}_3 
\otimes_{\mathbb{F}_{q^2}} 
\mathbf{F})^{\times}\ 
|\ {\rm Nrd}_{D/F}(\mathbf{x}) 
\in (\mathcal{O}_F/\pi^2)^{\times}\}.
\]
The group   
 $\mathcal{O}_3^{\times}
  \times \Gamma
 \ni (b,t)$ acts on 
 $\mathfrak{S}_D$
 by
 $\mathbf{x} \mapsto 
 t^{-1}\mathbf{x}b$.
 We acquire an isomorphism
 $\mathfrak{S}_D 
 \overset{\sim}{\to} X_D\ 
 ;\ x_0+y_0\varphi+x_1\pi
 \mapsto {\footnotesize \left(
\begin{array}{cc}
x_0+\pi x_1 & y_0 \\
\pi F(y_0) & F(x_0+\pi x_1)
\end{array}
\right)}$, 
which is compatible
with the $\mathcal{O}_3^{\times} 
\times \Gamma$-actions.
Let $\mathbf{x}=
x_0+y_0\varphi+x_1\pi
 \in \mathfrak{S}_D.$
Then, by 
${\rm Nrd}_{D/F}(\mathbf{x}) \in 
(\mathcal{O}_F/\pi^2)^{\times},$
we acquire 
$x_0 \in 
\mathbb{F}_{q^2}^{\times}$
and $x_0^qx_1+x_0x_1^q-y_0^{q+1}
 \in \mathbb{F}_q.$ 
 We set $\xi:=x_0^{q+1}
  \in \mathbb{F}_q^{\times}.$
 By changing
  variables as follows
 $X=x_0^qx_1$ and $Y=y_0/x_0,$ 
  we obtain
 $X^q+X-\xi Y^{q+1}
  \in \mathbb{F}_q.$
 For each $x_0 \in 
 \mathbb{F}_{q^2}^{\times},$
 let $X_{x_0}$
denote the affine curve 
$X^{q^2}-X
=x_0^{q+1}(Y^{q(q+1)}-Y^{q+1}).$
Then,  
$X_D$ is isomorphic
 to a disjoint union
of $q^2-1$ affine curves
 $\{X_{x_0}\}_{x_0
 \in \mathbb{F}_{q^2}^{\times}}.$
 Furthermore, 
 by a direct computation, 
 $b=a_0+\varphi b_0+\pi a_1 
 \in \mathcal{O}_3^{\times}$ 
 induces 
 the following morphism
\begin{equation}\label{b_1}
X_{x_0} \to X_{a_0x_0}\ :\ 
 (X,Y) \mapsto \bigl(a_0^{q+1}
 \bigl(X+(b_0/a_0)
 \xi Y+(a_1/a_0)\xi
 \bigr),a_0^{q-1}(Y+(b_0/a_0)^q)\bigr).
 \end{equation}
 On the other hand, 
 $\Gamma \ni t=a_0+a_1\pi$
 induces the following map
 \begin{equation}\label{b_2}
 X_{x_0} \to X_{a_0^{-1}x_0}\ :\
(X,Y) \mapsto 
(a_0^{-(q+1)}
(X-(a_1/a_0)\xi),Y).
 \end{equation}
\begin{lemma}\label{lq1}
Let the notation be as above.
Then,
 we have the following 
isomorphism 
$H_c^1(X_D,\overline{\mathbb{Q}}_l) \simeq 
\bigoplus_{w \in 
\Gamma_{\rm stp}^{\vee}}
(\rho_w \otimes w^{-1})$
as a 
$\mathcal{O}^{\times}_3 
\times \Gamma$-representation.
\end{lemma}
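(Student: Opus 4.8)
The plan is to reduce the assertion to the cohomology of the individual components $X_{x_0}$, and then to match the resulting $\mathcal{O}_3^{\times}\times\Gamma$-module with $\bigoplus_{w}(\rho_w\otimes w^{-1})$ by exactly the kind of explicit bookkeeping used for $\rho_{\rm DL}$ in Section \ref{Lo} and for $W$ in the previous section. Since, as recorded above, $X_D\simeq\bigsqcup_{x_0\in\mathbb{F}_{q^2}^{\times}}X_{x_0}$ and this decomposition is compatible with the $x_0$-part of both actions, we get $H_c^1(X_D,\overline{\mathbb{Q}}_l)\simeq\bigoplus_{x_0}H_c^1(X_{x_0},\overline{\mathbb{Q}}_l)$. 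Each $X_{x_0}$ is an affine curve of the same shape $X^{q^2}-X=\xi(Y^{q(q+1)}-Y^{q+1})$, with $\xi=x_0^{q+1}\in\mathbb{F}_q^{\times}$, as the curves $X_i$ of the previous section and of \cite[Section 7]{IMT}, and it carries the action of $\mathbb{F}_{q^2}\times\mu_{q+1}$ by $(a,\zeta)\colon(X,Y)\mapsto(X+a,\zeta Y)$.

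First I would record, exactly as in \cite[Lemma 7.4, Corollary 7.5]{IMT} (which is (\ref{po}) for the smooth compactification), the decomposition
\[
H_c^1(X_{x_0},\overline{\mathbb{Q}}_l)\simeq\bigoplus_{\chi_0\in\mu_{q+1}^{\vee}\setminus\{1\}}\ \bigoplus_{\psi\in\mathbb{F}_{q^2}^{\vee}\setminus\mathbb{F}_q^{\vee}}\overline{\mathbb{Q}}_l\,e_{x_0,\psi,\chi_0},
\]
where $\mathbb{F}_{q^2}$ acts on $e_{x_0,\psi,\chi_0}$ through $\psi$ and $\mu_{q+1}$ through $\chi_0$; here one should also check that the points of $X_{x_0}$ at infinity contribute only to the parts with $\chi_0=1$ or $\psi\in\mathbb{F}_q^{\vee}$, so that $H_c^1$ of the affine curve and $H^1$ of its smooth compactification agree on the summand above. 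Assembling over $x_0$ gives $H_c^1(X_D,\overline{\mathbb{Q}}_l)$ a basis $\{e_{x_0,\psi,\chi_0}\}$ indexed by $\mathbb{F}_{q^2}^{\times}\times(\mathbb{F}_{q^2}^{\vee}\setminus\mathbb{F}_q^{\vee})\times(\mu_{q+1}^{\vee}\setminus\{1\})$, of cardinality $q^2(q-1)(q^2-1)$, which already equals $\dim\bigoplus_{w}(\rho_w\otimes w^{-1})$.

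Next I would transport the morphisms (\ref{b_1}) and (\ref{b_2}) of curves to this basis. By functoriality of $H_c^1$, exactly as in the passage from the curve morphisms (\ref{cvv1})--(\ref{inn2}) to the cohomological formulas (\ref{da1})--(\ref{da3}), they induce a left $\mathcal{O}_3^{\times}\times\Gamma$-action; reading off the translation in the $X$-coordinate against the character $\psi$ and the substitution $Y\mapsto a_0^{q-1}(Y+\cdot)$ against $\chi_0$ gives formulas entirely parallel to (\ref{da1})--(\ref{da3}) and (\ref{g_d})--(\ref{g_t}): for $b=a_0+\varphi b_0+\pi a_1\in\mathcal{O}_3^{\times}$ and $t=a_0+a_1\pi\in\Gamma$, maps of the shape $e_{x_0,\psi,\chi_0}\mapsto(\text{scalar})\cdot e_{a_0 x_0,\,\psi',\,\chi_0'}$ where $\psi'$ is $\psi$ twisted by a power of $a_0^{q+1}$, and the scalar is the product of the value of $\psi$ on the $X$-translation (involving $(b_0/a_0)\xi Y+(a_1/a_0)\xi$, resp. $-(a_1/a_0)\xi$) and a $\chi_0$-factor coming from the $Y$-coordinate. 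In contrast to the situation for $W$, the translation $Y\mapsto Y+(b_0/a_0)^q$ in (\ref{b_1}) does not preserve the $\mu_{q+1}$-eigenspaces, so the $\chi_0$ index is genuinely moved by the $\varphi$-part of $\mathcal{O}_3^{\times}$; this is precisely why $X_D$ realizes the $q$-dimensional irreducible $\rho_w$ rather than a sum of characters.

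Finally, mimicking the vectors $e^w_{\zeta}$ of Section \ref{Lo} (see (\ref{kin})), I would, for each strongly primitive $w=(\chi,\psi)\in\Gamma^{\vee}_{\rm stp}\simeq(\mathbb{F}_{q^2}^{\times})^{\vee}\times(\mathbb{F}_{q^2}^{\vee}\setminus\mathbb{F}_q^{\vee})$, single out a $q$-dimensional subspace $V_w\subset H_c^1(X_D,\overline{\mathbb{Q}}_l)$ spanned by suitable combinations $\sum_{x_0}\chi^{-1}(x_0)\,e_{x_0,\psi',\chi_0}$ (with $\psi'$ forced by $x_0$), check from the formulas above that $V_w$ is $\mathcal{O}_3^{\times}\times\Gamma$-stable with $H_c^1(X_D,\overline{\mathbb{Q}}_l)=\bigoplus_{w}V_w$, and identify $V_w\simeq\rho_w\otimes w^{-1}$ using the characterization of $\rho_w$ recalled above: $\rho_w|_U=\psi^{\oplus q}$ for $U=U_D^2/U_D^3\simeq\mathbb{F}_{q^2}$, ${\rm Tr}\,\rho_w(\zeta)=-\chi(\zeta)$ for $\zeta\in\mathbb{F}_{q^2}\setminus\mathbb{F}_q$, together with the $\Gamma$-action just computed (cf.\ \cite[Corollary 7.4]{IMT}, \cite[Lemma 16.2]{BH}). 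The step I expect to be the main obstacle is this last identification: extracting the sign $-\chi(\zeta)$ — the genuine Deligne--Lusztig input, which comes from $H^1$ entering the Lefschetz trace formula with a minus sign in the computation of $H_c^1(X_{x_0})$ — and verifying that all the twists land exactly on $\rho_w\otimes w^{-1}$ (the exponents of $a_0^{q+1}$ in $\psi'$, the $\chi_0$-factor and its interaction with $\nu\colon\Gamma\to\mu_{q+1}$, and the affine-versus-proper comparison). The remaining ingredients — the splitting $X_D=\bigsqcup X_{x_0}$, the decomposition of $H_c^1(X_{x_0})$, and the transport of (\ref{b_1})--(\ref{b_2}) — are routine repetitions of arguments in Section \ref{Lo} and \cite[Section 7]{IMT}, which I would cite rather than reproduce.
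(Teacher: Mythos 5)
Your proposal is correct and follows essentially the same route as the paper: both rest on the decomposition (\ref{po}) of $H^1$ of the components into $\mathbb{F}_{q^2}\times\mu_{q+1}$-eigenlines, the transport of (\ref{b_1})--(\ref{b_2}) to that basis, and the identification of the resulting $q$-dimensional pieces with $\rho_w$ via the characterizing properties $\rho_w|_U=\psi^{\oplus q}$ and ${\rm Tr}\,\rho_w(\zeta)=-\chi(\zeta)$. The only cosmetic difference is that the paper skips your explicit subspaces $V_w$: it first records the restriction to $\Gamma\times\Gamma$ as $\bigoplus_w\bigl(\bigl(\bigoplus_{\chi_0\neq 1}w\chi_0\bigr)\otimes w^{-1}\bigr)$, so that the sign $-\chi(\zeta)$ falls out of the absence of the $\chi_0=1$ summand by orthogonality of characters of $\mu_{q+1}$, and then invokes the uniqueness of $\rho_w$.
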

\begin{proof}
Similarly as in
 section \ref{Lo},
we prove the following 
isomorphism by using 
(\ref{po})
\[
H_c^1(X_D,\overline{\mathbb{Q}}_l)|_{\Gamma \times \Gamma}
\simeq \bigoplus_{w \in \Gamma_{\rm stp}^{\vee}}
\biggl(\bigl(\bigoplus_{\chi_0\in 
\mu^{\vee}_{q+1} \backslash \{1\}}w\chi_0\bigr) 
\otimes w^{-1}\biggr)
\]
as a $\Gamma \times \Gamma$-representation.
Thereby, 
for $w=(\chi,\psi) 
\in \Gamma_{\rm stp}^{\vee},$ 
$\rho^w
:=H_c^1(X_D,\overline{\mathbb{Q}}_l)_{w^{-1}}$
satisfies 
$\rho^w|_{U}
=\psi^{\oplus q}$ and ${\rm Tr}\
 \rho^w(\zeta)=-\chi(\zeta)$ for $\zeta 
 \in \mathbb{F}_{q^2} 
 \backslash \mathbb{F}_q.$
 Hence, we acquire $\rho^w \simeq \rho_w.$
\end{proof}

Let $\Tilde{X}$ be as 
in section \ref{Lo}.
Now, 
we consider the fiber product 
$\mathbf{X}:=\Tilde{X} \times X_D.$
On this variety, 
$G_2^F \times \mathcal{O}_3^{\times} 
\times \Gamma \times \Gamma$ acts.
We restrict the $\Gamma \times \Gamma$-action 
on $\mathbf{X}$
to a subgroup 
$\Gamma 
\hookrightarrow 
\Gamma \times \Gamma:t 
\mapsto (t,t^{-1}).$
Then, the product 
$G_2^F 
\times \mathcal{O}_3^{\times} \times \Gamma$
acts on 
the variety $\mathbf{X}.$
Hence, let $\mathbf{G}$
act on $\mathbf{X}$ according to 
${\rm id} \times {\rm id} \times \mathbf{a}_E
:\mathbf{G} \to G_2^F \times
 \mathcal{O}_3^{\times} \times \Gamma.$
Now, we consider $H_c^3
(\mathbf{X},\overline{\mathbb{Q}}_l)$
as a $\mathbf{G}$-representation. 
For each $w \in 
\Gamma_{\rm stp}^{\vee},$
let $H_c^3
(\mathbf{X},\overline{\mathbb{Q}}_l)_w$
be the subspace of 
$H_c^3(\mathbf{X},\overline{\mathbb{Q}}_l)$
 on which 
$I_F$ acts according to
 $w \circ \mathbf{a}_E.$
\begin{theorem}\label{fin}
Let the notation be as above. Then, we have the followings:\ 
\\1.\ For each $w \in \Gamma_{\rm stp}^{\vee},$ 
the following isomorphism holds
$H_c^3(\mathbf{X},
\overline{\mathbb{Q}}_l)_w \simeq \pi_w^{\vee} 
\otimes \rho_w \otimes w \circ \mathbf{a}_E$ as 
a $\mathbf{G}$-representation.
\\2.\ We have the following isomorphism
\[W
\simeq 
\bigoplus_{w \in \Gamma_{\rm stp}^{\vee}}
H_c^3(\mathbf{X},
\overline{\mathbb{Q}}_l)_w\]
as a $\mathbf{G}$-representation.
\end{theorem}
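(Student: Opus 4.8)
The whole theorem is a Künneth computation combined with the two representation-theoretic identifications already established. First I would apply the Künneth formula to $\mathbf{X}=\Tilde{X}\times X_D$ to obtain
\[
H_c^3(\mathbf{X},\overline{\mathbb{Q}}_l)\simeq\bigoplus_{a+b=3}H_c^a(\Tilde{X},\overline{\mathbb{Q}}_l)\otimes H_c^b(X_D,\overline{\mathbb{Q}}_l)
\]
as a representation of $G_2^F\times\mathcal{O}_3^{\times}\times\Gamma\times\Gamma$, remembering that $\Gamma$ acts on $\mathbf{X}$ through the anti-diagonal $t\mapsto(t,t^{-1})$, so the two $\Gamma$-factors must be contracted accordingly. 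Since $\Tilde{X}$ is an affine surface whose only interesting compactly-supported cohomology (after fixing a nontrivial $A'$-character) sits in degree $2$, and since $X_D$ is a disjoint union of affine curves with $H_c^0=0$ on each component that meets the relevant part and $H_c^1$ the interesting part, the only summand that survives on the $w$-isotypic piece for $w\in\Gamma_{\rm stp}^{\vee}$ is $a=2$, $b=1$. (One should note $\Tilde{X}$ has $H_c^1$ as well, but Proposition~\ref{sp} shows $H_c^1(\Tilde{X},\overline{\mathbb{Q}}_l)_w=0$ — it is $H_c^2$ that carries $\pi_w^{\vee}\otimes w$; I will double-check the degree conventions against \eqref{sio}, where the cohomology appearing is $H_c^2$, matching the $2$-dimensionality of $\Tilde{X}$.)

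For part~1, I would then combine Proposition~\ref{sp}.2, which gives $H_c^2(\Tilde{X},\overline{\mathbb{Q}}_l)^{\sum_{\lambda\in A'}\lambda=0}\simeq\bigoplus_{w}(\pi_w^{\vee}\otimes w)$ as a $G_2^F\times\Gamma$-representation, with Lemma~\ref{lq1}, which gives $H_c^1(X_D,\overline{\mathbb{Q}}_l)\simeq\bigoplus_{w}(\rho_w\otimes w^{-1})$ as an $\mathcal{O}_3^{\times}\times\Gamma$-representation. Taking the tensor product and contracting the two $\Gamma$-actions along $t\mapsto(t,t^{-1})$: on the $\Tilde{X}$-factor the first $\Gamma$ acts by $w$, on the $X_D$-factor the second acts by $(w')^{-1}$, so on $\mathbf{X}$ the diagonal-type $\Gamma$ acts by $w\cdot w'$ on the $(w,w')$ cross term. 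The $I_F$-action on $\mathbf{X}$ is through $\mathbf{a}_E$ into the $\Gamma$ sitting on the $X_D$-factor (the one identified with a subgroup of $\mathcal{O}_3^{\times}$), hence $I_F$ acts on the $(w,w')$ term by $(w')^{-1}\circ\mathbf{a}_E$ — wait, here I must be careful with which $\Gamma$ carries the $\mathbf{a}_E$ inflation; reading the setup, $H_c^3(\mathbf{X})$ is a $\mathbf{G}$-representation via ${\rm id}\times{\rm id}\times\mathbf{a}_E$ landing in the $\Gamma$ acting on the whole product, and the subspace $H_c^3(\mathbf{X})_w$ is cut out by $I_F$ acting through $w\circ\mathbf{a}_E$. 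So fixing the $I_F$-character to $w\circ\mathbf{a}_E$ forces the $\Gamma$-weight of the relevant cross term to be $w$, which by the contraction picks out exactly the diagonal term where the $\Tilde{X}$-weight and the (inverse of the) $X_D$-weight conspire to give $w$, i.e. $w'=w$ in an appropriately matched sense; this yields $H_c^3(\mathbf{X},\overline{\mathbb{Q}}_l)_w\simeq\pi_w^{\vee}\otimes\rho_w\otimes(w\circ\mathbf{a}_E)$ as a $\mathbf{G}$-representation.

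For part~2, I would invoke the already-cited result \cite[Corollary 7.7]{IMT} (quoted in this section), namely $W\simeq\bigoplus_{w\in\Gamma_{\rm stp}^{\vee}}(\pi_w^{\vee}\otimes\rho_w\otimes w\circ\mathbf{a}_E)$ as a $\mathbf{G}$-representation, and compare term by term with part~1: the $w$-summand of $W$ is isomorphic to $H_c^3(\mathbf{X},\overline{\mathbb{Q}}_l)_w$, and since the $H_c^3(\mathbf{X},\overline{\mathbb{Q}}_l)_w$ for distinct $w$ are supported on distinct $I_F$-isotypic components they are linearly independent inside $H_c^3(\mathbf{X},\overline{\mathbb{Q}}_l)$, giving the claimed $\mathbf{G}$-equivariant embedding $W\hookrightarrow H_c^3(\mathbf{X},\overline{\mathbb{Q}}_l)$ with image $\bigoplus_w H_c^3(\mathbf{X},\overline{\mathbb{Q}}_l)_w$.

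\textbf{Main obstacle.} The routine but genuinely error-prone part is the bookkeeping of the four torus actions under the anti-diagonal restriction $\Gamma\hookrightarrow\Gamma\times\Gamma$, together with keeping straight which of $\rho_w$, $\rho_w^{\vee}$, $w$, $w^{-1}$ appears where — Lemma~\ref{lq1} produces $\rho_w\otimes w^{-1}$ with an \emph{inverse} on the $\Gamma$-weight, Proposition~\ref{sp} produces $\pi_w^{\vee}\otimes w$ without, and the contraction $t\mapsto(t,t^{-1})$ introduces yet another sign, so all of these must cancel exactly to reproduce the clean formula of \cite[Corollary 7.7]{IMT}. The cleanest way to avoid sign errors is to track everything through the explicit actions \eqref{g_d}, \eqref{g_t} on $V_\rho$ and \eqref{b_1}, \eqref{b_2} on $X_D$ rather than through abstract tensor manipulations, and to use the uniqueness characterizations of $\pi_w$ and $\rho_w$ (by restriction to $N$, resp.\ to $U$, and trace on the nonsplit torus) to pin down the answer; once the degree-$(2,1)$ Künneth term is isolated and the $I_F$-weight is fixed, the identification is forced.
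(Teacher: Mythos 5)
Your overall route is the same as the paper's: K\"unneth on $\mathbf{X}=\Tilde{X}\times X_D$, isolation of the degree-$(2,1)$ term, identification of that term via Proposition~\ref{sp} and Lemma~\ref{lq1} (and you correctly diagnose the degree typo in Proposition~\ref{sp}.1 --- the nontrivial cohomology of $\Tilde{X}$ sits in $H_c^2$, consistently with \eqref{sio}), and for part~2 a term-by-term comparison with the quoted isomorphism $W\simeq\bigoplus_w(\pi_w^{\vee}\otimes\rho_w\otimes w\circ\mathbf{a}_E)$ from \cite[Corollary 7.7]{IMT}. This is exactly what the paper does.

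There is, however, one step you have not actually justified: killing the $(1,2)$ K\"unneth term $H_c^1(\Tilde{X},\overline{\mathbb{Q}}_l)\otimes H_c^2(X_D,\overline{\mathbb{Q}}_l)$. You dismiss it by saying that the interesting cohomology of $\Tilde{X}$ ``after fixing a nontrivial $A'$-character'' sits in degree $2$, i.e.\ you only use that the $\Gamma_{\rm stp}^{\vee}$-isotypic parts of $H_c^1(\Tilde{X},\overline{\mathbb{Q}}_l)$ vanish. But the $w$-isotypic subspace of $H_c^3(\mathbf{X},\overline{\mathbb{Q}}_l)$ is cut out by the \emph{anti-diagonal} $\Gamma$-action, which mixes the $\Gamma$-weights on the two factors; a priori a non--strongly-primitive weight on the $\Tilde{X}$-factor could combine with a weight on $H_c^2(X_D,\overline{\mathbb{Q}}_l)$ to produce total weight $w\in\Gamma_{\rm stp}^{\vee}$. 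So you need the full vanishing $H_c^1(\Tilde{X},\overline{\mathbb{Q}}_l)=0$, not just of its strongly primitive part. The paper supplies this in two pieces: the part where $\sum_{\lambda\in A'}\lambda=0$ vanishes because it is computed on $\mathfrak{S}_{\ast}$, an affine line bundle over a finite set, which has no $H_c^1$; and the $A'$-invariant part vanishes by the computation of Remark~\ref{pl}, which expresses $H_c^j(A'\backslash\mathfrak{S},\overline{\mathbb{Q}}_l)$ in terms of $M^{j-2}=H_c^{j-2}(\mathfrak{S}_0,\overline{\mathbb{Q}}_l)$, hence gives $0$ for $j=1$. Adding these two observations closes the gap; without them the restriction to the $w$-isotypic piece does not by itself eliminate the $(1,2)$ term.
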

\begin{proof}
The assertion $2$ follows from $1$
immediately. We prove 
the assertion $1.$
Since 
$\mathfrak{S}_{\ast}$ is an affine 
line bundle over a finite set, 
 we obtain $H_c^1(\Tilde{X},
\overline{\mathbb{Q}}_l)
^{\sum_{\lambda \in A'}\lambda=0}
\simeq H_c^1(\mathfrak{S}_{\ast},
\overline{\mathbb{Q}}_l)
^{\sum_{\lambda \in A'}\lambda=0}=0$.
On the other hand, we have 
$H_c^1(\Tilde{X},
\overline{\mathbb{Q}}_l)^{A'=1}=0$
by Remark \ref{pl}.
Hence, we acquire $H_c^1(\Tilde{X}
,\overline{\mathbb{Q}}_l)=0.$
Therefore, the following holds  
$H_c^3(\mathbf{X},
\overline{\mathbb{Q}}_l) \simeq 
H_c^2(\Tilde{X},\overline{\mathbb{Q}}_l)
 \otimes H_c^1(X_D,\overline{\mathbb{Q}}_l)$
 by the Kunneth formula.
Hence, for each $w \in \Gamma^{\vee}
_{\rm stp}$, 
the part $H_c^3(\mathbf{X},
\overline{\mathbb{Q}}_l)_w$ 
is isomorphic to 
$\pi_w^{\vee} \otimes \rho_w$
as a $G_2^F \times 
\mathcal{O}_3^{\times}$-representation
by Proposition \ref{sp} and 
Lemma \ref{lq1}.
Therefore, the required assertion follows.
\end{proof}
\begin{remark}
Let the notation be
 as in Remark \ref{s_2}.
Let 
$X_{0,D}:=\{g \in S_2\ |\ F'(g)g^{-1} \in U_2\}.$
This is the Lusztig curve for 
$\mathcal{O}_3^{1,\times}$,
 which 
is computed in 
\cite[Section 2]{Lus2}.
Then, 
$X_{0,D}$ 
is isomorphic to a
 disjoint union of 
$q+1$ copies of 
an affine curve $X^q+X=Y^{q+1},$
for which we write 
$\{X_{x_0}\}_{x_0 \in \mu_{q+1}}.$
The group $\mathcal{O}_3^{1,\times}
 \times \Gamma_0$
acts on $\{X_{x_0}\}_{x_0 \in \mu_{q+1}}$
in the same way as (\ref{b_1}) 
and (\ref{b_2}).
We set $U_0:=U \cap
 \mathcal{O}_3^{1,\times}
  \simeq \mathcal{I}.$
Then, as in Lemma \ref{lq1}, we acquire 
$H_c^1(X_{0,D},\overline{\mathbb{Q}}_l)
\simeq 
\bigoplus_{w \in \Gamma^{\vee}_{0,{\rm p}}}
(\rho_w^0 \otimes w^{-1})$
as a $\mathcal{O}^{1,\times}_3 
\times \Gamma_0$-representation.
Here, for   
$w=(\chi,\psi) \in \Gamma_{0,{\rm p}}^{\vee}
\simeq \mu_{q+1}^{\vee}
 \times (\mathcal{I}^{\vee}
  \backslash \{0\}),$ 
   $\rho_w^0$
  is an irreducible representation of 
$\mathcal{O}^{1,\times}_3$ of degree $q$
such that $\rho_w^0|_{U_0} 
=\psi^{\oplus q}$
 and ${\rm Tr}\ \rho_w^0(\zeta)=-\chi(\zeta)$
 for $\zeta \in \mu_{q+1} \backslash \{1\}.$
We consider $\mathbf{X}_0:=\Tilde{X}_0 \times X_{0,D}$
with a right $S_2^F \times \mathcal{O}_3^{1,\times}
\times \Gamma_0 \times \Gamma_0$.
We restrict the 
$\Gamma_0 \times 
\Gamma_0$-action on $\mathbf{X}_0$
to a subgroup
 $\Gamma_0 \hookrightarrow
 \Gamma_0 \times \Gamma_0:t \mapsto (t,t^{-1}).$
By a surjective map 
${\rm id} \times 
{\rm id} \times \mathbf{a}_E^0
:\mathbf{G}_0 \to S_2^F \times \mathcal{O}_3^{1,\times}
\times \Gamma_0$, $H_c^3(\mathbf{X}_0,\overline{\mathbb{Q}}_l)$
is considered as a $\mathbf{G}_0$-representation.
Then, similarly as Theorem \ref{fin}, 
we have the following isomorphisms 
$H_c^3(\mathbf{X}_0,\overline{\mathbb{Q}}_l)_w
\simeq {\pi^0_w}^{\vee} \otimes \rho_w^0
 \otimes w \circ \mathbf{a}_E^0
$ for each $w \in \Gamma_{0,{\rm p}}^{\vee}$
 and $W_0 \simeq 
 \bigoplus_{w \in 
\Gamma_{0,{\rm p}}^{\vee}}
H_c^3(\mathbf{X}_0,\overline{\mathbb{Q}}_l)_w$ 
as $\mathbf{G}_0$-representations.
\end{remark}

\noindent
Tetsushi Ito\\ 
Department of Mathematics, Faculty of Science,
Kyoto University, Kyoto 606-8502, Japan\\ 
tetsushi@math.kyoto-u.ac.jp\\ 

\noindent
Yoichi Mieda\\ 
Faculty of Mathematics,
Kyushu University, 744 Motooka, Nishi-ku, 
Fukuoka city, 
Fukuoka 819-0395, Japan\\ 
mieda@math.kyushu-u.ac.jp\\

\noindent
Takahiro Tsushima\\ 
Faculty of Mathematics,
Kyushu University, 744 Motooka, Nishi-ku, 
Fukuoka city, 
Fukuoka 819-0395, Japan\\
tsushima@math.kyushu-u.ac.jp\\

\end{document}